\title[Morse-Bott foliation on the solid Klein bottle]{Diffeomorphism groups of Morse-Bott foliation on the solid Klein bottle by Klein bottles parallel to the boundary}
\author{Sergiy Maksymenko}
\address{Algebra and Topology Department, Institute of Mathematics of National Academy of Sciences of Ukraine \\
Tereshchenkivska str., 3, Kyiv, 01024, Ukraine}
\email{maks@imath.kiev.ua}
\keywords{Foliation, diffeomorphism, homotopy type, solid Klein bottle}
\subjclass[2020]{
    57R30, % Foliations in differential topology; geometric theory
    57T20% Homotopy groups of topological groups and homogeneous spaces
}
\newcommand\mycolor[1]{}
\setlist[enumerate]{itemsep=0.3ex, topsep=0.3ex, label={\rm(\arabic*)}}
\setlist[itemize]{itemsep=0.3ex, topsep=0.3ex, leftmargin=4ex}
\newtheorem{theorem}[subsection]{Theorem}
\newtheorem{lemma}[subsection]{Lemma}
\newtheorem{corollary}[subsection]{Corollary}
\newtheorem{remark}[subsection]{Remark}
\newcommand\testshape{family=\f@family; series=\f@series; shape=\f@shape.}
\def\myemphInternal#1{\if n\f@shape%
\begingroup\itshape #1\endgroup\/%
\else\begingroup\sf\itshape\small #1\endgroup%
\fi}
\def\myemph{\futurelet\testchar\MaybeOptArgmyemph}
\def\MaybeOptArgmyemph{\ifx[\testchar \let\next\OptArgmyemph
                 \else \let\next\NoOptArgmyemph \fi \next}
\def\OptArgmyemph[#1]#2{\index{#1}\myemphInternal{#2}}
\def\NoOptArgmyemph#1{\myemphInternal{#1}}
\newcommand\term[2][\empty]{\myemph[#1]{#2}}
\newcommand\amatr[4]{\left(\!\begin{smallmatrix}#1\ &#2\\[0.5mm] #3\ &#4 \end{smallmatrix}\!\right)}
\newcommand\Aman{A}
\newcommand\Cman{C}
\newcommand\Kman{K}
\newcommand\Mman{M}
\newcommand\Tman{T}
\newcommand\Uman{U}
\newcommand\Xman{X}
\newcommand\Yman{Y}
\newcommand\bC{\mathbb{C}}
\newcommand\bR{\mathbb{R}}
\newcommand\bZ{\mathbb{Z}}
\newcommand\id{\mathrm{id}}          % identity map    
\newcommand\eps{\varepsilon}                   % epsilon
\newcommand\ntimes{\widetilde{\times}}         % non-trivial semidirect product 
\newcommand\GL{\mathrm{GL}}
\newcommand\SL{\mathrm{SL}}
\newcommand\Diff{\mathcal{D}}       % diffeomorphisms
\newcommand\DiffId{\Diff_{\id}}     % identity path components of diffeomorphisms
\newcommand\Cinfty{\mathcal{C}^{\infty}}
\newcommand\Ci[2]{\mathcal{C}^{\infty}(#1,#2)}               % space of C^\infty maps
\newcommand\fixsymbol{\mathrm{fix}}%{\!\times}
\newcommand\invsymbol{}
\newcommand\nbsymbol{\mathrm{nb}}
\newcommand\folsymbol{*}%{\approx}
\newcommand\DiffInv[3][\empty]{\Diff_{\invsymbol}(#2,#3\ifx\empty #1\relax\else,#1\fi)}
\newcommand\DiffFix[3][\empty]{\Diff_{\fixsymbol}(#2,#3\ifx\empty #1\relax\else,#1\fi)}
\newcommand\DiffNb[3][\empty]{\Diff_{\nbsymbol}(#2,#3\ifx\empty #1\relax\else,#1\fi)}
\newcommand\DiffHFix[3][\empty]{\Diff^{0}_{\fixsymbol}(#2,#3\ifx\empty #1\relax\else,#1\fi)}
\newcommand\DiffHNb[3][\empty]{\Diff^{0}_{\nbsymbol}(#2,#3\ifx\empty #1\relax\else,#1\fi)}
\newcommand\DiffPlusFix[3][\empty]{\Diff^{+}_{\fixsymbol}(#2,#3\ifx\empty #1\relax\else,#1\fi)}
\newcommand\FDiff[2][\empty]{\Diff^{\folsymbol}(#2\ifx\empty #1\relax\else,#1\fi)}
\newcommand\FDiffFix[2][\empty]{\Diff^{\folsymbol}_{\fixsymbol}(#2\ifx\empty #1\relax\else,#1\fi)}
\newcommand\FDiffA[2][\empty]{\Diff^{=}(#2\ifx\empty #1\relax\else,#1\fi)}
\newcommand\VBAut[2][\empty]{\GL(#2\ifx\empty #1\relax\else,#1\fi)}
\newcommand\DiffLP{\Diff}  % leaf preserving diffeomorphisms
\newcommand\DiffLPInv[3][\empty]{\DiffLP_{inv}(#2,#3\ifx\empty#1\relax\else,#1\fi)}
\newcommand\DiffLPFix[3][\empty]{\DiffLP_{fix}(#2,#3\ifx\empty#1\relax\else,#1\fi)}
\newcommand\DiffLPNb[3][\empty]{\DiffLP_{nb}(#2,#3\ifx\empty#1\relax\else,#1\fi)}
\newcommand\func{f}
\newcommand\gfunc{g}
\newcommand\dif{h}
\newcommand\gdif{g}
\newcommand\qdif{q}
\newcommand\px{x}
\newcommand\py{y}
\newcommand\pz{z}
\newcommand\Circle{S^1}
\newcommand\Klein{\Kman}
\newcommand\SKlein{\mathbf{\Kman}}
\newcommand\Klev[1]{\Kman_{#1}}
\newcommand\STorus{\mathbf{T}}
\newcommand\Tlev[1]{\Tman_{#1}}
\newcommand\Lpq[2]{L_{#1,#2}} % Lens space Lpq
\newcommand\FolDiff{\Diff^{fol}}    % foliated diffeomorphisms
\newcommand\FolLpDiff{\Diff^{lp}}   % leaf-preserving diffeomorphisms
\newcommand\Foliation{\mathcal{F}}
\newcommand\GFoliation{\mathcal{G}}
\newcommand\vbp{{\mycolor{blue}p}}  % projection
\newcommand\az{{\mycolor{red}\pz}}
\newcommand\xtor[2]{{}^{#1}_{#2}}
\newcommand\xar[2]{ \ar@{=>}[r]^-{#1}_-{#2}}
\newcommand\diflpdir[4][1.4em]{\xymatrix@C=#1{#2: \xtor{0}{1} \xar{#3}{#4} & \xtor{0}{1}}}
\newcommand\diflprew[4][1.4em]{\xymatrix@C=#1{#2: \xtor{0}{1} \xar{#3}{#4} & \xtor{1}{0}}}
\begin{document}
\begin{abstract}
Let $\mathcal{G}$ be a Morse-Bott foliation on the solid Klein bottle $\mathbf{K}$ into $2$-dimensional Klein bottles parallel to the boundary and one singular circle $S^1$.
Let also $S^1\widetilde{\times}S^2$ be the twisted bundle over $S^1$ which is a union of two solid Klein bottles $\mathbf{K}_0$ and $\mathbf{K}_1$ with common boundary $K$.
Then the above foliations $\mathcal{G}$ on both $\mathbf{K}_0$ and $\mathbf{K}_1$ gives a foliation $\mathcal{G}'$ on $S^1\widetilde{\times}S^2$ into parallel Klein bottles and two singluar circles.
The paper computes the homotopy types of groups of foliated (sending leaves to leaves) and leaf preserving diffeomorphisms for foliations $\mathcal{G}$ and $\mathcal{G}'$.
\end{abstract}

\maketitle

%% covering maps and projections
\newcommand\prFromPlaneToDisk{{\color{red}\alpha}}   %     R^2 x (0;1] -> (D^2\0) x R
\newcommand\prFromDiskToKlein{{\color{red}\beta}}     %     C x R -> C x~ S^1  --  twisted bundle
\newcommand\prFromKleinToCircle{{\color{red}\gamma}}   %     C x~ S^1 -> p

%  composition  C x R ---from-disk-to-twisted--->
%                     C x~ S^1 ---from-twisted-to-circle--->   S^1
\newcommand\prFromDiskToCircle{{\color{red}\kappa}}
\newcommand\prFromDiskToR{{\color{red}\lambda}}
\newcommand\prRToCircle{{\color{red}\sigma}}   %     R -> S^1
\newcommand\prFromPlaneToKlein{{\color{red}\zeta}}   %     R^2 x (0;1] -> K

\newcommand\ha{\mathbf{a}}
\newcommand\hb{\mathbf{b}}

\newcommand\aaprod[2]{#2\times#1}
\newcommand\naaprod[2]{#2\,\ntimes\,#1}

\newcommand\Disk{D^2}

\newcommand\CxS{\aaprod{\bC}{\Circle}}
\newcommand\CnxS{\naaprod{\bC}{\Circle}}
\newcommand\CxR{\aaprod{\bC}{\bR}}
\newcommand\CnxR{\naaprod{\bC}{\bR}}
\newcommand\SSnxS{\naaprod{S^2}{\Circle}}
\newcommand\DxR{\aaprod{\Disk}{\bR}}
\newcommand\SxR{\aaprod{\Circle}{\bR}}
\newcommand\ZxR{\aaprod{0}{\bR}}
\newcommand\DzxR{\aaprod{(D^2\setminus\{0\})}{\bR}}
\newcommand\CzxR{\aaprod{(\bC\setminus\{0\})}{\bR }}

\newcommand\tfib[1]{\mathsf{T}_{\!\mathrm{fib}}(\dif)}

\newcommand\kTwist{\tau}
\newcommand\kMu{\mu}
\newcommand\kLambda{\lambda}
\newcommand\tTwist{\widehat{\kTwist}}
\newcommand\tMu{\widehat{\kMu}}
\newcommand\tLambda{\widehat{\kLambda}}

\newcommand\at{{\color{red}\tau}}
\newcommand\aw{{\color{blue}w}}
\newcommand\arad{{\color{blue}r}}
\newcommand\as{{\color{blue}s}}
\newcommand\aphi{{\color{blue}\phi}}

\newcommand\tfunc{\widehat{\func}}
\newcommand\tgfunc{\gfunc'}   % {\widehat{\gfunc}}
\newcommand\ttgfunc{\gfunc''} % {\widehat{\widehat{\gfunc}}}
\newcommand\tdif{\dif'}       % {\widehat{\dif}}
\newcommand\ttdif{\dif''}     % {\widehat{\widehat{\dif}}}

\newcommand\liftFunc[1]{\lambda_{#1}}

\newcommand\Autfx{\mathcal{A}}
\newcommand\tAutfx{\widehat{\Autfx}}
\newcommand\PAutfx{\mathcal{A}^{+}}
\newcommand\tPAutfx{\widehat{\Autfx}^{+}}

\newcommand\qA{{\color{orange}\sigma}}
\newcommand\fGrp[1]{\mathcal{B}_{#1}}

\newcommand\fGTotal{\fGrp{ 0 }}
\newcommand\fGFixZR{\fGrp{ 1 }}
\newcommand\fGLin{\fGrp{ 2 }}

\newcommand\fGFixSandw{\fGrp{ 3 }}
\newcommand\fGFixCollar{\fGrp{ 4 }}

\newcommand\TFoliation{\mathcal{F}}
\newcommand\KFoliation{\mathcal{G}}
\newcommand\SFoliation{\widehat{\KFoliation}}

\newcommand\DLpKdK{\FolLpDiff(\KFoliation,\partial\SKlein)}
\newcommand\DLpK{\FolLpDiff(\KFoliation)}
\newcommand\DFolKdk{\FolDiff(\KFoliation,\partial\SKlein)}
\newcommand\DFolK{\FolDiff(\KFoliation)}

\newcommand\DiffRCommShift{\Diff_{\eta}^{+}(\bR)}

\newcommand\RCircle[1]{S_{#1}}   % |z| = r
\newcommand\RDisk[1]{D_{#1}}     % |z|\leq r
\newcommand\RTD[1]{T_{#1}}                % torus, leaf of the foliation
\newcommand\RSkl[1]{\mathbf{K}_{#1}}      % solid Klein bottle
\newcommand\RBd[1]{\mathbf{C}^{#1}}       % neighborhood of boundary Klein bottle
\newcommand\XC{\Klev{0}}

\newcommand\aConst{0.2}
\newcommand\bConst{0.8}

\newcommand\Hhom{H}
\newcommand\Ghom{G}
\newcommand\nrm[1]{\vert#1\vert}

\newcommand\vba{a}
\newcommand\vbb{b}
\newcommand\vbc{c}

\newcommand\CRRaa{\Cinfty_{*}(\bR,\bR)}

\section{Introduction}
Let $D^2 = \{|\aw|\leq 1\}$ be the unit disk in the complex plane, $\Circle=\partial D^2$ be its boundary, and $\STorus = \Circle\times D^2$ be the solid torus.
Define the following $\Cinfty$ function $\func:\STorus\to[0;1]$ by $\func(\az,\aw) = \nrm{\aw}^2$, and for every $\arad\in[0;1]$ let $\Tlev{\arad}:=\func^{-1}(\arad)$ be the inverse image of $\arad$.
Evidently, $\Tlev{\arad}$ is a $2$-torus for $\arad\in(0;1]$ and $\Tlev{0}$ is a circle.
Denote by $\TFoliation = \{ \Tlev{\arad}\mid\arad\in[0;1]\}$ the partition on $\STorus$ into the inverse images of $\func$.

Notice that $\func$ is a \term{Morse-Bott} function (in some sense the most simplest one), and the corresponding partition $\TFoliation$ is a \term{Morse-Bott} foliation, see e.g.~\cite{Bott:IHES:1988, MartinezAlfaroMezaSarmientoOliveira:TMNA:2018, EvangelistaSuarezTorresVera:JS:2019}.
Such foliations play and important role in Hamiltonian and Poisson geometries, however in the present paper we will not use this interpretation.

Given a partition $\Foliation$ on a manifold $\Mman$ we will say that a diffeomorphism $\dif:\Mman\to\Mman$ is \term{$\Foliation$-leaf preserving} if it leaves invaraint each leaf of $\Foliation$, i.e.\ $\dif(\omega)=\omega$ for all $\omega\in\Foliation$.
Also, $\dif$ is \term{$\Foliation$-foliated} whenever the image $\dif(\omega)$ of each leaf $\omega\in\Foliation$ is again a (perhaps some other) leaf of $\Foliation$.
Then for a subset $\Xman\subset\Mman$ we will denote by $\FolLpDiff(\Foliation,\Xman)$ and $\FolDiff(\Foliation,\Xman)$ respectively the groups of $\Foliation$-leaf preserving and $\Foliation$-foliated diffeomorphisms of $\Mman$.
If $\Xman=\varnothing$, we will omit it from notations.
Evidently, $\FolLpDiff(\Foliation,\Xman)$ is a normal subgroup of $\FolDiff(\Foliation,\Xman)$.

In a series of previous papers by the author and O.~Khokhliuk~\cite{KhokhliukMaksymenko:IndM:2020, KhokhliukMaksymenko:PIGC:2020, KhokhliukMaksymenko:fol_nbh:2022, KhokhliukMaksymenko:lens:2022, Maksymenko:lens:2023} there were computed homotopy types of groups of foliated and leaf preserving diffeomorphisms of the above foliation $\TFoliation$ on $\STorus$.
Namely, the following results are obtained:
\begin{theorem}[{\cite[Theorem~1.1.1]{KhokhliukMaksymenko:lens:2022}}]\label{th:STorus:DiffLp_contr}
The group $\FolLpDiff(\TFoliation,\partial\STorus)$ is weakly contractible.
\end{theorem}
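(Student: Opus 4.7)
\emph{Proof strategy.}
My plan is to show that $\FolLpDiff(\TFoliation,\partial\STorus)$ is weakly contractible by constructing, for any smooth family of leaf-preserving diffeomorphisms parametrized over $S^n$, an explicit null-homotopy; in fact I aim to produce a canonical continuous deformation retraction of the whole group to $\{\id\}$.

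I would work in the coordinates $(\az,\aw)\in \Circle\times D^2$ on $\STorus$, in which any $\dif\in\FolLpDiff(\TFoliation,\partial\STorus)$ has the form $\dif(\az,\aw)=(A(\az,\aw),B(\az,\aw))$ with $|B(\az,\aw)|=|\aw|$ (leaf preservation) and $\dif|_{\{|\aw|=1\}}=\id$ (boundary condition). Factoring $B=\aw\,e^{i\Phi}$ away from $\Tlev{0}$ reveals that the non-trivial content of $\dif$ consists of smooth functions $A-\az$ and $\Phi$ with specified boundary values and smoothness conditions at the singular core; along $\Tlev{0}$ itself, the $0$-jet of $\dif$ is a diffeomorphism $A_0\in\Diff^{+}(\Circle)$ and the normal $1$-jet is a map $e^{iB_0}\colon\Circle\to U(1)$.

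The main deformation would proceed in two coordinated layers. First, using the boundary condition $\dif|_{\partial\STorus}=\id$ and a smooth cutoff in $|\aw|$, a straight-line interpolation of $A-\az$ and $\Phi$ produces an isotopy through $\FolLpDiff(\TFoliation,\partial\STorus)$ after which $\dif$ is the identity on a collar of $\partial\STorus$. Second, the remaining diffeomorphism, now supported in a slightly smaller solid torus, would be contracted to $\id$ by combining (a) an explicit ``correction'' leaf-preserving diffeomorphism that absorbs the $1$-jet data $(A_0,e^{iB_0})$ along $\Tlev{0}$ through a null-isotopy supported in a tube around the core, with (b) a ``radial squeezing'' of the leftover diffeomorphism, which after $1$-jet trivialization now admits a smooth extension to the identity as the radial parameter tends to zero. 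The crucial point making the squeezing work in the right topology is that the singular leaf structure causes the would-be meridional Dehn twist around the core to be trivializable, because the meridional disk fibers bounded by small meridians of $\Tlev{r}$ provide a canonical disk filling.

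\textbf{Main obstacle.} The principal technical difficulty is preserving smoothness along the singular leaf $\Tlev{0}$ throughout the isotopy. The polar decomposition $\aw=se^{i\phi}$ degenerates at $s=0$, so the factorization $B=\aw\,e^{i\Phi}$ is only valid on $\STorus\setminus\Tlev{0}$, and each step of the deformation must be verified to yield a globally smooth diffeomorphism of $\STorus$, not merely of $\STorus\setminus\Tlev{0}$. In particular, the existence and continuous dependence of the $1$-jet correcting diffeomorphism, and the uniform $C^{\infty}$-boundedness of the radial squeezing family as the scaling parameter tends to zero, both rest on a careful classification of the admissible Taylor expansions of $A$ and $\Phi$ in $\aw$ along $\Tlev{0}$. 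This local analysis at the singular circle is where I expect the bulk of the technical work to concentrate.
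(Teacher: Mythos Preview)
Your outline captures the right spirit---split into a collar step and a core step, and isolate smoothness at the singular leaf as the crux---but the second layer as stated has a genuine gap. Trivializing only the $1$-jet of $\dif$ along $\Tlev{0}$ is not enough for a radial squeeze to contract $\dif$ smoothly to the identity: any version of the scaling $\px\mapsto \dif(t\px)/t$ (with or without a radial cutoff) converges, as $t\to 0$, only to the fibrewise \emph{linearization} of $\dif$ near the core, so after your step~(b) you are left with a diffeomorphism equal to $\id$ near $\Tlev{0}$ and on a collar of $\partial\STorus$ but still arbitrary on the intermediate annular region. That leftover piece is essentially a loop in $\DiffId(T^2)$ (one diffeomorphism of the leaf torus per radius), and since $\pi_1\DiffId(T^2)\cong\bZ^2$ such loops are not automatically null-homotopic. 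You correctly observe that the meridional generator is killed by the disk filling, but you say nothing about the longitudinal one.

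The argument in the cited source (reproduced in Section~\ref{sect:proof:th:homtype:Kfol} for the Klein-bottle analogue) handles exactly these issues through a chain of subgroups $\fGFixCollar\subset\fGFixSandw\subset\fGLin\subset\fGFixZR\subset\fGTotal$, each inclusion a homotopy equivalence. The device you are missing is the passage through \emph{liftings to the universal cover}: the subgroup $\fGFixZR$ is defined by requiring the lift $\tdif$ to be fixed on the preimage of the core, a condition strictly stronger than $\dif|_{\Tlev{0}}=\id$ and precisely what pins down the longitudinal winding. After that, the scaling you call ``radial squeezing'' is used only to pass from $\fGFixZR$ to $\fGLin$ (linearization near the core via a Hadamard-type formula); a separate convexity argument then kills the linear rotation to reach $\fGFixSandw$; and the final annular remainder is handled not by squeezing but by an embedding into the \emph{null-homotopic} component $\Omega_0\bigl(\DiffId(T^2),\id\bigr)$, whose weak contractibility comes from $\pi_{i+1}\DiffId(T^2)\cong\pi_{i+1}(T^2)=0$ for $i\geq 1$. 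In short, your step~(b) needs to be refined into these sub-steps and capped by a loop-space argument rather than a direct squeeze to $\id$.
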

\begin{theorem}[{\rm\cite{Maksymenko:lens:2023}}]\label{th:STorus:Lp_is_def_retr_Fol}
The pair $\bigl(\FolLpDiff(\TFoliation), \FolLpDiff(\TFoliation,\partial\STorus) \bigr)$ is a strong deformation retract of the pair $\bigl(\FolDiff(\TFoliation), \FolDiff(\TFoliation,\partial\STorus) \bigr)$.
\end{theorem}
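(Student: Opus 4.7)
My plan is to build an explicit strong deformation retraction by controlling the induced action on the leaf space $[0;1]$ and then lifting a canonical contraction of that leaf space back to $\FolDiff(\TFoliation)$.

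The first step is to set up the \emph{leaf-space projection}. Every foliated diffeomorphism $\dif\in\FolDiff(\TFoliation)$ sends the singular circle $\Tlev{0}$ to itself and the boundary torus $\Tlev{1}=\partial\STorus$ to itself, so it induces a diffeomorphism $\phi_\dif$ of $[0;1]$ fixing both endpoints, uniquely characterized by $\func\circ\dif=\phi_\dif\circ\func$. The assignment $p\colon\dif\mapsto\phi_\dif$ is a continuous homomorphism from $\FolDiff(\TFoliation)$ to $\Diff^{+}([0;1],\{0,1\})$ whose kernel is exactly $\FolLpDiff(\TFoliation)$. Continuity of $p$ in the $\Cinfty$-topology is a standard check obtained by reading $\phi_\dif$ off the identity $\func\circ\dif=\phi_\dif\circ\func$.

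The second step is to produce an explicit continuous section of $p$ landing inside $\FolDiff(\TFoliation,\partial\STorus)$. Given $\phi\in\Diff^{+}([0;1],\{0,1\})$, I would define
\[
s(\phi)(\az,\aw) \;=\; \Bigl(\az,\; \aw\sqrt{\phi(|\aw|^{2})/|\aw|^{2}}\,\Bigr)\qquad(\aw\neq 0),
\]
extended across $\aw=0$ by continuity. The technical point that I expect to be the main obstacle is smoothness at $\aw=0$: since $\phi(0)=0$ and $\phi$ is smooth, the function $r\mapsto\phi(r)/r$ extends smoothly and strictly positively to $\phi'(0)>0$ at $r=0$, which in turn makes $\aw\mapsto\aw\sqrt{\phi(|\aw|^{2})/|\aw|^{2}}$ a smooth radial diffeomorphism of the disk. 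Because $\phi(1)=1$, the formula fixes $\partial\STorus$ pointwise, and $s(\id)=\id$; continuity of $s$ in the $\Cinfty$-topologies is then routine. A standard convex interpolation $\alpha_{t}(\phi)(r):=(1-t)\phi(r)+t\,r$ provides a strong deformation retraction of $\Diff^{+}([0;1],\{0,1\})$ onto $\{\id\}$ with $\alpha_{t}(\id)=\id$ throughout.

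Finally, I would assemble everything into the homotopy
\[
\Hhom_{t}(\dif)\;=\;\dif\cdot s(\phi_\dif)^{-1}\cdot s(\alpha_{t}(\phi_\dif)).
\]
Then $\Hhom_{0}(\dif)=\dif$, while $\Hhom_{1}(\dif)=\dif\cdot s(\phi_\dif)^{-1}$ induces $\phi_\dif\cdot\phi_\dif^{-1}=\id$ on the leaf space and is therefore leaf-preserving. If $\dif\in\FolLpDiff(\TFoliation)$ then $\phi_\dif=\id$ and $\Hhom_{t}(\dif)=\dif$ for every $t$, giving the strong retraction property. If $\dif$ additionally fixes $\partial\STorus$ pointwise, then $\phi_\dif(1)=1$ forces each of the three factors of $\Hhom_{t}(\dif)$ to fix $\partial\STorus$ pointwise as well, so $\Hhom$ restricts to a strong deformation retraction on the subgroup pair. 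Thus the same homotopy realizes the strong deformation retraction of the pair claimed in the statement.
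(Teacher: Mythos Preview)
The paper does not actually prove this theorem; it is stated with a citation to \cite{Maksymenko:lens:2023} and used as input. So there is no in-paper proof to compare against. That said, your argument is correct and is exactly the natural approach one expects behind that citation: pass to the leaf space via the homomorphism $p\colon\FolDiff(\TFoliation)\to\Diff^{+}([0;1],\partial)$, exploit convexity of the target, and lift the straight-line contraction back through an explicit radial section $s$. Your section $s$ is even a homomorphism (one checks $s(\psi)\circ s(\phi)=s(\psi\circ\phi)$ directly), which makes the formula $\Hhom_t(\dif)=\dif\cdot s(\phi_\dif)^{-1}\cdot s(\alpha_t(\phi_\dif))$ especially clean, and since every $s(\phi)$ is fixed on $\partial\STorus$ the homotopy automatically preserves the subgroup $\FolDiff(\TFoliation,\partial\STorus)$, giving the pair statement.

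Two small points worth making explicit if you flesh this out. First, smoothness of $\phi_\dif$ at $0$ is not entirely automatic from $\func\circ\dif=\phi_\dif\circ\func$, since $\func$ is not a submersion along $\Tlev{0}$; you need the (standard) fact that a smooth function on $\bC$ depending only on $|\aw|^2$ is a smooth function of $|\aw|^2$. Second, continuity of $s$ in the $\Cinfty$ topology uses continuity of the Hadamard division $\phi\mapsto\phi(r)/r$, which is again standard but deserves a word. Both are exactly the sort of details the present paper handles elsewhere with the same ``section plus convex target'' pattern (see the treatment of the inclusion $\fGFixZR\subset\fGTotal$ in Section~\ref{sect:proof:th:homtype:Kfol}), so your approach is entirely in the spirit of the paper.
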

It is also known that the group $\Diff(\STorus,\partial\STorus)$ of diffeomorphisms of $\STorus$ fixed on its boundary is contractible (N.~Ivanov~\cite{Ivanov:ZNSL:1976}), while the group of all diffeomorphisms $\Diff(\STorus)$, contains as a deformation retract a certain semidirect product $\mathcal{A}:=(\Circle\times\Circle)\rtimes\Uman$, where $\Uman = \{ \amatr{\eps}{n}{0}{\delta} \mid \eps,\delta\in\{\pm1\}, n\in\bZ \} \subset \SL(2,\bZ)$ (this is a classical result).
In particular, $\pi_0\Diff(\STorus) \cong \Uman$.
In fact, $\mathcal{A}$ is contained even in $\FolLpDiff(\TFoliation,\partial\STorus)$, and Theorems~\ref{th:STorus:DiffLp_contr} and~\ref{th:STorus:Lp_is_def_retr_Fol} imply that the following inclusions are weak homotopy equivalences:
\begin{gather*}
\xymatrix@R=0.7ex{
    \ \{\id_{\STorus}\}                       \  \ar@{^(->}[r] &
    \ \FolLpDiff(\TFoliation,\partial\STorus) \  \ar@{^(->}[r] &
    \ \FolDiff(\TFoliation,\partial\STorus)   \  \ar@{^(->}[r] &
    \ \Diff(\STorus,\partial\STorus),         \    \\
    \ \mathcal{A}                             \  \ar@{^(->}[r] &
    \ \FolLpDiff(\TFoliation)                 \  \ar@{^(->}[r] &
    \ \FolDiff(\TFoliation)                   \  \ar@{^(->}[r] &
    \ \Diff(\STorus).                         \
}
\end{gather*}
Furthermore, gluing two copies $\STorus_0$ and $\STorus_1$ of the solid torus by some diffeomorphisms between their boundaries, one gets a lens space $\Lpq{p}{q}$.
Then the foliation $\TFoliation$ on each of those tori gives a foliation $\TFoliation_{p,q}$ on $\Lpq{p}{q}$ into two singluar circles and parallel $2$-tori.
In~\cite{KhokhliukMaksymenko:lens:2022, Maksymenko:lens:2023} there were also computed the homotopy types of the groups $\FolLpDiff(\TFoliation_{p,q})$ and $\FolDiff(\TFoliation_{p,q})$.

In the present paper we will make similar computations for the non-orientable counterpatrs of $\STorus$ and lens spaces: the solid Klein bottle $\SKlein$ and the twisted $S^2$-bundle over the circle $\SSnxS$.

More precisely, consider the following orientation reversing diffeomorphism $\xi:\STorus\to\STorus$ of order $2$ given by $\xi(\aw,\az)=(\bar{\aw}, -\az)$.
Then the quotient space $\SKlein := \STorus/\xi$ is called the \term{solid Klein bottle}.
It is a non-orientable $3$-manifold and the corresponding quotient map $\vbp:\STorus\to\SKlein$ is its orientable double covering.
Moreover, it is evident that $\func\circ\xi=\func$, whence there exists a $\Cinfty$ function $\gfunc:\SKlein\to[0;1]$ such that $\func = \gfunc\circ\vbp$.
For $t\in[0;1]$ let $\Klev{t} = \gfunc^{-1}(t)$ be the corresponding level set of $\gfunc$, so $\XC$ is a circle, while for $t\in(0;1]$ the set $\Klev{t}$ is a ($2$-dimensional) Klein bottle.
In particular, $\Klev{1} = \partial\SKlein$.

Note also that $\Tlev{\arad} = \vbp^{-1}(\Klev{t})$ and the restriction maps $\vbp:\Tlev{\arad} \to \Klev{t}$ (oriented for $\arad>0$) double coverings.

Let $\KFoliation = \{ \Klev{t} \}_{t\in[0;1]}$ be the partition of $\SKlein$ into level sets of $\gfunc$.
Then $\gfunc$ is a Morse-Bott function for which $\XC$ is a non-degenerate critical manifold, and all other points of $\SKlein$ are regular for $\gfunc$.
Therefore one can regard $\KFoliation$ as a \term{Morse-Bott} foliation with the singluar leaf $\XC$.

Our aim is to compute the homotopy types of the groups $\DFolK$ and $\DLpK$ of $\KFoliation$-foliated and $\KFoliation$-leaf preserving diffeomorphism of $\SKlein$ respectively, and their respective subgroups fixed on $\partial\SKlein$.
In fact most of the preliminary work is done in the mentioned above papers, and here we will just use their results for explicit computations.
We are also aimed here to illustrate usefulness of the developed methods.

First we recall the following statement:
\begin{lemma}\label{lm:DiffK}
$\Diff(\SKlein,\partial\SKlein)$ is contractible, while $\Diff(\SKlein)$ is homotopy equivalent to $\Circle\times\bZ_2\times\bZ_2$, i.e.\ to the disjoint union of $4$ circles.
\end{lemma}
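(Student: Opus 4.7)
The plan is to reduce both statements to the classical results on the solid torus $\STorus$ via the orientation double cover $\vbp:\STorus\to\SKlein$, whose deck transformation is the free involution $\xi(\aw,\az)=(\bar\aw,-\az)$. Each $\dif\in\Diff(\SKlein)$ admits exactly two lifts to $\STorus$, both commuting with $\xi$, so writing $\Diff^{\xi}(\STorus)$ for the group of $\xi$-equivariant diffeomorphisms one obtains a short exact sequence of topological groups
\[
1 \longrightarrow \langle\xi\rangle \longrightarrow \Diff^{\xi}(\STorus) \longrightarrow \Diff(\SKlein) \longrightarrow 1.
\]
Since $\xi|_{\partial\STorus}$ is fixed-point-free, exactly one of the two lifts of any $\dif\in\Diff(\SKlein,\partial\SKlein)$ restricts to the identity on $\partial\STorus$, so this correspondence refines to a topological-group isomorphism $\Diff(\SKlein,\partial\SKlein)\cong\Diff^{\xi}(\STorus,\partial\STorus)$.

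For the contractibility claim, $\Diff(\STorus,\partial\STorus)$ is contractible by Ivanov's theorem recalled in the introduction, and the subgroup $\Diff^{\xi}(\STorus,\partial\STorus)$ is the fixed-point subspace of the continuous $\bZ_2$-action of $\langle\xi\rangle$ by conjugation. The key step is to arrange a $\xi$-equivariant contracting homotopy of $\Diff(\STorus,\partial\STorus)$ onto $\id$; this can be done either by inspection of Ivanov's explicit deformation or by a local averaging argument in Banach-manifold charts. Any such equivariant contraction restricts to $\Diff^{\xi}(\STorus,\partial\STorus)$, proving that $\Diff(\SKlein,\partial\SKlein)$ is contractible.

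For the second statement, I would exhibit three canonical $\xi$-equivariant diffeomorphisms of $\STorus$ that descend to $\SKlein$: the rotation $\phi_s(\aw,\az)=(\aw,e^{is}\az)$, the involution $\nu(\aw,\az)=(-\aw,\az)$, and the reflection $\rho(\aw,\az)=(\aw,\bar\az)$. These generate a subspace $\Hhom\subset\Diff(\SKlein)$ which is a disjoint union of four circles parametrised by $\phi_s$, one for each class in $\langle\nu,\rho\rangle\cong\bZ_2\times\bZ_2$. To promote the inclusion $\Hhom\hookrightarrow\Diff(\SKlein)$ to a weak equivalence I would start from the classical equivalence $\Diff(\STorus)\simeq(\Circle\times\Circle)\rtimes\Uman$ with $\Uman=\{\amatr{\eps}{n}{0}{\delta}\}$, restrict to the $\xi$-equivariant subspace — the $\az$-rotation surviving entirely, the $\aw$-rotation surviving only as the $\bZ_2$ generated by $\nu$, and the infinite discrete factor $\Uman$ cutting down to the four classes represented by $\id,\rho,\xi,\rho\xi$ — and then quotient by $\langle\xi\rangle$ to recover $\Diff(\SKlein)\simeq\Hhom\simeq\Circle\times\bZ_2\times\bZ_2$.

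The principal obstacle is the equivariant version of Ivanov's contracting homotopy, together with the explicit identification of those mapping classes in $\Uman$ admitting $\xi$-equivariant representatives, the obstruction being that generic Dehn twists along a meridian disk fail to commute with $\xi$. A cleaner alternative that avoids these equivariance subtleties altogether is to apply the Smale-type theorems of Hatcher and of McCullough--Rubinstein directly to the Haken Seifert-fibred manifold $\SKlein$, which yield both statements in a single step.
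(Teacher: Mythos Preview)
Your approach via the orientation double cover and $\xi$-equivariant diffeomorphisms of $\STorus$ is quite different from the paper's. The paper argues entirely on $\SKlein$ itself by assembling classical citations: Ivanov's result on Waldhausen manifolds gives contractibility of $\Diff(\SKlein,\partial\SKlein)$ directly; this makes the restriction fibration $\rho:\Diff(\SKlein)\to\Diff(\partial\SKlein)$ (Cerf/Palais/Lima) a homotopy equivalence; Lickorish identifies $\pi_0\Diff(\SKlein)\cong\bZ_2\oplus\bZ_2$ and shows $\rho$ is surjective; and Earle--Eells/Gramain show that each path component of $\Diff(\partial\SKlein)=\Diff(\Klein)$ is homotopy equivalent to $\Circle$. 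No lift to $\STorus$ ever appears.

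Your main line of argument has a genuine gap, which you correctly flag yourself: passing from contractibility of $\Diff(\STorus,\partial\STorus)$ to contractibility of its $\xi$-fixed subgroup $\Diff^{\xi}(\STorus,\partial\STorus)$ requires an \emph{equivariant} contraction, and in general the fixed-point set of a $\bZ_2$-action on a contractible space need not be contractible. Your two proposed remedies are both unsubstantiated: Ivanov's proof is not an explicit deformation that one can simply inspect for equivariance, and ``local averaging in Banach-manifold charts'' does not assemble into a global contracting homotopy. The same issue recurs in your treatment of $\Diff(\SKlein)$: knowing that $(\Circle\times\Circle)\rtimes\Uman$ is a deformation retract of $\Diff(\STorus)$ does not allow you to conclude that its $\xi$-equivariant sublocus is a deformation retract of $\Diff^{\xi}(\STorus)$, again absent an equivariant retraction. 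Your closing sentence --- applying the Hatcher/Ivanov-type theorems directly to the Haken Seifert-fibred manifold $\SKlein$ --- is precisely the paper's route, and is the one that actually delivers both statements without the equivariance obstacle.
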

\begin{proof}[History of proof]
W.~B.~R.~Likorish~\cite{Lickorish:PCPS:1963} shown that $\pi_0\Diff(\SKlein) \cong \bZ_{2}\oplus\bZ_{2}$ and that each diffeomorphism of $\partial\SKlein$ extends to some diffeomorphism of $\SKlein$.
The latter can be rephrased so that the restriction map $\rho:\Diff(\SKlein) \to \Diff(\partial\SKlein)$, $\rho(\dif)=\dif|_{\partial\SKlein}$, (being also a continuous homomorphism) is surjective.
Evidently, its kernel is the group $\Diff(\SKlein,\partial\SKlein)$ of diffeomorphisms of $\SKlein$ fixed on the boundary.
Notice that the map $\rho$ is known to be a locally trivial fibration which is a particular case of \term{``local triviality for embeddings''} statement independently proved by J.Cerf~\cite{Cerf:BSMF:1961}, R.~Palais~\cite{Palais:CMH:1960}, and E.~Lima~\cite{Lima:CMH:1964}.

Also, N.~Ivanov~\cite{Ivanov:ZNSL:1976} obtianed a general result on Waldhausen manifold which includes the statement that $\Diff(\SKlein,\partial\SKlein)$ is contractible.

This implies that $\rho$ is a homotopy equivalence.
Moreover, it is also proved by C.~Earle and J.~Eeels~\cite{EarleEells:JGD:1969} and A.~Gramain~\cite{Gramain:ASENS:1973} that the path components of $\Diff(\partial\SKlein)$ are homotopy equivalent to the circle.
Hence $\Diff(\SKlein)$ is homotopy equivalent to $\Circle\times\bZ_2\times\bZ_2$, i.e.\ to the disjoint union of $4$ circles.
\end{proof}

Our first result shows that the corresponding $\KFoliation$-foliated and $\KFoliation$-leaf preserving counterparts of the groups from Lemma~\ref{lm:DiffK} have the same homotopy types.
So the situation here is literally the same as in Theorems~\ref{th:STorus:DiffLp_contr} and~\ref{th:STorus:Lp_is_def_retr_Fol}.

\begin{theorem}\label{th:homtype:Kfol}
The following statements hold.
\begin{enumerate}[leftmargin=*]
\item\label{enum:th:homtype:Kfol:lp_dK_contr}
The group $\DLpKdK$ is weakly contractible.

\item\label{enum:th:homtype:Kfol:lp_fol_def_retr}
The pair $\bigl(\DLpK, \DLpKdK \bigr)$ is a strong deformation retract of $\bigl(\DFolK, \DLpKdK \bigr)$.
\end{enumerate}
They imply that the following maps denoted by (w.)h.e. are (weak) homotopy equivalences:
\[
    \xymatrix@R=2ex@C=6.7ex{
        \ \{\id_{\SKlein}\}                \     \ar@{^(->}[r]^-{\text{\bf w.h.e}} \ar@/^5ex/[rrr]^{\text{h.e}} &
        \ \DLpKdK                          \     \ar@{^(->}[r]^-{\text{\bf h.e}}  &
        \ \DFolKdk                         \     \ar@{^(->}[r]^-{\text{\bf w.h.e}} &
        \ \Diff(\SKlein,\partial\SKlein),                                 \
    }
\]
\[
\xymatrix@R=2ex@C=6.7ex{
    \ \DLpK                                \     \ar@{^(->}[r]^-{\text{\bf h.e}}   &
    \ \DFolK                               \     \ar@{^(->}[r]^{\text{\bf w.h.e.}}   &
    \ \Diff(\SKlein)                       \     \ar@{->>}[r]^-{\rho}_-{\text{h.e.}} &
    \ \Diff(\partial\SKlein) \simeq \mathop{\sqcup}\limits_{4} \Circle,
}
\]
where the notations in \textbf{bold} denote new results and implications.
\end{theorem}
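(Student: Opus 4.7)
The plan is to reduce both assertions to their solid-torus counterparts, Theorems~\ref{th:STorus:DiffLp_contr} and~\ref{th:STorus:Lp_is_def_retr_Fol}, through the orientation double cover $\vbp\colon\STorus\to\SKlein$ with deck involution $\xi$. Since $\func=\gfunc\circ\vbp$, every $\KFoliation$-leaf preserving (resp.\ $\KFoliation$-foliated) diffeomorphism $\dif$ of $\SKlein$ admits exactly two lifts to $\TFoliation$-leaf preserving (resp.\ $\TFoliation$-foliated) diffeomorphisms of $\STorus$; of these, precisely one fixes $\partial\STorus$ pointwise whenever $\dif$ fixes $\partial\SKlein$. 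The distinguished lift $\tdif$ is automatically $\xi$-equivariant, because $\xi\tdif$ and $\tdif\xi$ are two lifts of $\dif$ that agree on $\partial\STorus$. This yields homeomorphisms of topological groups
\[
\DLpKdK\;\cong\;\FolLpDiff(\TFoliation,\partial\STorus)^{\xi},
\qquad
\DFolKdk\;\cong\;\FolDiff(\TFoliation,\partial\STorus)^{\xi},
\]
and analogous identifications for $\DLpK$ and $\DFolK$ with the $\xi$-equivariant subgroups.

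To establish~\eqref{enum:th:homtype:Kfol:lp_dK_contr} and~\eqref{enum:th:homtype:Kfol:lp_fol_def_retr} it then suffices to check that the weak contraction of Theorem~\ref{th:STorus:DiffLp_contr} and the strong deformation retract of Theorem~\ref{th:STorus:Lp_is_def_retr_Fol} can be chosen $\xi$-equivariant, since they would then restrict to the corresponding $\xi$-fixed subgroups and give exactly the desired statements. All ingredients of the cited constructions in~\cite{KhokhliukMaksymenko:lens:2022,Maksymenko:lens:2023} — leafwise isotopies, reparametrisations of $\func$, flows along vector fields tangent to $\TFoliation$, and the induced self-map of the leaf space $[0;1]$ — are canonically attached to the $\xi$-invariant data $(\STorus,\TFoliation,\func)$. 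Equivariance can therefore be arranged by choosing $\xi$-invariant auxiliary data at each step (cut-off functions, vector fields, Riemannian metrics), so that every intermediate homotopy commutes with conjugation by $\xi$. The main technical obstacle is precisely this bookkeeping: retracing the proofs of the cited theorems and confirming equivariance at each stage.

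The chain of (weak) equivalences at the bottom of the statement then follows formally. In the top row, Lemma~\ref{lm:DiffK} provides that $\Diff(\SKlein,\partial\SKlein)$ is contractible, so combined with~\eqref{enum:th:homtype:Kfol:lp_dK_contr} and~\eqref{enum:th:homtype:Kfol:lp_fol_def_retr} every arrow is a (weak) equivalence by two-out-of-three. In the bottom row, $\rho$ is a homotopy equivalence by Lemma~\ref{lm:DiffK} and the first inclusion is a homotopy equivalence by~\eqref{enum:th:homtype:Kfol:lp_fol_def_retr}; the remaining weak equivalence $\DFolK\hookrightarrow\Diff(\SKlein)$ is obtained by comparing the restriction-to-boundary maps of $\DFolK$ and $\Diff(\SKlein)$ to $\Diff(\partial\SKlein)$ (which are Serre fibrations by local triviality) and applying the five-lemma to the associated long exact sequences, once one verifies that the map $\DFolK\to\Diff(\partial\SKlein)$ is surjective — each of the four components of $\Diff(\partial\SKlein)$ has a $\KFoliation$-foliated representative obtained by descending an explicit foliated diffeomorphism of $\STorus$ via $\vbp$.
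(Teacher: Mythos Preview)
Your outline is correct and close in spirit to the paper's proof: both ultimately adapt the solid-torus constructions of~\cite{KhokhliukMaksymenko:lens:2022, Maksymenko:lens:2023} to $\SKlein$. The packaging differs. You lift to the double cover $\STorus$ and propose to verify $\xi$-equivariance of each torus-side homotopy; the paper instead works directly on $\SKlein$, lifting to its universal cover $\DxR$ (and to $\bR^2\times(0;1]$ for $\SKlein\setminus\XC$), and reruns the chain-of-subgroups argument $\fGFixCollar\subset\fGFixSandw\subset\fGLin\subset\fGFixZR\subset\fGTotal=\DLpKdK$ intrinsically there, with explicit formulas for each deformation. The ``bookkeeping'' you defer is precisely the content the paper writes out; the direct approach has the minor advantage that the final loop-space step lands immediately in $\Omega_0(\DiffId(\Klein),\id_{\Klein})$ rather than requiring an identification of the $\xi$-fixed loops in $\DiffId(T^2)$. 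One small imprecision in your setup: for the non-relative groups, every lift of $\dif\in\DLpK$ to $\STorus$ is automatically $\xi$-equivariant, so $\DLpK$ is the quotient $\FolLpDiff(\TFoliation)^{\xi}/\langle\xi\rangle$ rather than $\FolLpDiff(\TFoliation)^{\xi}$ itself; this does not affect your argument, since an equivariant deformation still descends. Your five-lemma derivation of the weak equivalence $\DFolK\hookrightarrow\Diff(\SKlein)$ via the restriction fibrations over $\Diff(\partial\SKlein)$ is exactly the reasoning implicit in the paper's statement.
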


This theorem will be proved in Section~\ref{sect:proof:th:homtype:Kfol}.

As mentioned above a \term{lens space} is a $3$-manifold obtianed by gluing two solid tori $\STorus_0$ and $\STorus_1$ by some diffeomorphism between their boundaries, and there are infinitely many mutually non-diffeomorphic lens spaces.
On the other hand, since every diffeomorphism of the Klein bottle $\Klein$ extends to a diffeomorphism of the solid Klein bottle $\SKlein$, it follows that gluing two copies of $\SKlein$ by some diffeomorphism of their boundaries gives always rise to the same manifold $\SSnxS$ being a total space of a unique non-trivial $S^2$-bundle over $\Circle$ called the \term{twisted $S^2$-bundle over $\Circle$}.

Therefore one can regard $\SSnxS$ as the union of two Solid klein bottles $\KFoliation_0$ and $\KFoliation_1$ glued by the indentity diffeomorphism of their boundaries.
On each $\SKlein_i$, $i=0,1$, we have defined above the foliation $\KFoliation$ into paralled $2$-dimensional Klein bottles and one singluar circle.
These foliations constitute together a foliation on $\SSnxS$ into parallel Klein bottles and two singluar circles $\Circle_i \subset\SKlein_i$.
We will denote that foliation by $\SFoliation$, and it will be convenient to call it \term{polar}.

As a consequence of Theorem~\ref{th:homtype:Kfol} we get the following description of the homotopy types of $\SFoliation$-foliated and $\SFoliation$-leaf preserving diffeomorphisms of $\SSnxS$.
Notice that each $\dif\in\FolLpDiff(\SFoliation)$ leaves invariant the common boundary $\partial\SKlein_0=\partial\SKlein_1$ which we will denote by $\Klein$.
Hence we have a well-defined continuous restriction homomorphism $\rho:\FolLpDiff(\SFoliation) \to \Diff(\Klein)$, $\rho(\dif)=\dif|_{\Klein}$.
Its kernel is eveidently the group $\FolLpDiff(\SFoliation,\Klein)$ of $\SFoliation$-leaf preserving diffeomorphisms fixed on $\Klein$.

Denote by $\FolDiff_{+}(\SFoliation)$ the (index $2$) subgroup of $\FolDiff(\SFoliation)$ consisting of diffeomorphisms leaving invaraint each singular circle $\Circle_0$ and $\Circle_1$.
Our second result if the following Theorem~\ref{th:homtype:S2nxS1} which will be proved in Section~\ref{sect:proof:th:homtype:S2nxS1}:
\begin{theorem}\label{th:homtype:S2nxS1}
The following statements hold.
\begin{enumerate}[label={\rm(\arabic*)}, leftmargin=*]
\item\label{enum:th:homtype:S2nxS1:lp_fol_def_retr} The group $\FolLpDiff(\SFoliation)$ is a strong deformation retract of $\FolDiff_{+}(\SFoliation)$.
\item\label{enum:th:homtype:S2nxS1:Dlp_DK} The ``restriction to $\Klein:=\partial\SKlein_0=\partial\SKlein_1$ homomorphism'' $\rho:\FolLpDiff(\SFoliation) \to \Diff(\Klein)$ is a weak homotopy equivalence.
\end{enumerate}
In particular, $\FolLpDiff(\SFoliation)$ and $\FolDiff_{+}(\SFoliation)$ are weakly homotopy equivalent to the disoint union of $4$ circles, while $\FolDiff(\SFoliation)$ is homotopy equivalent to the disjoint union of $8$ circles.
\end{theorem}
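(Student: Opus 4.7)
The plan is to decompose $\SSnxS = \SKlein_0 \cup_{\Klein} \SKlein_1$ and reduce everything to Theorem~\ref{th:homtype:Kfol}. A diffeomorphism $\dif \in \FolDiff(\SFoliation)$ induces a self-homeomorphism $\bar\dif$ of the leaf space, which is an interval with endpoints corresponding to $\Circle_0, \Circle_1$ and interior point $0$ corresponding to $\Klein$; the condition $\dif \in \FolDiff_{+}(\SFoliation)$ means $\bar\dif$ fixes the endpoints, and $\dif \in \FolLpDiff(\SFoliation)$ means $\bar\dif = \id$. In particular, every leaf-preserving $\dif$ preserves $\Klein$ and each $\SKlein_i$, yielding the set-theoretic identification $\FolLpDiff(\SFoliation) \cong \DLpK \times_{\Diff(\Klein)} \DLpK$ via restriction to the two sides.

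For item~\ref{enum:th:homtype:S2nxS1:Dlp_DK}: under the fibered-product identification, the kernel of $\rho$ becomes $\DLpKdK \times \DLpKdK$, which is weakly contractible by Theorem~\ref{th:homtype:Kfol}\ref{enum:th:homtype:Kfol:lp_dK_contr}. Using a foliated tubular neighborhood $\Klein \times (-\eps, \eps)$ of $\Klein$ in $\SSnxS$, one extends $\phi \in \Diff(\Klein)$ via $(x, t) \mapsto (\phi(x), t)$ in the collar and matches it to a fixed leaf-preserving extension outside; adapting the Cerf--Palais--Lima ``local triviality of embeddings'' argument to this collar setup exhibits $\rho$ as a locally trivial fibration onto its image. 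The long exact homotopy sequence together with the weak contractibility of the fibre then forces $\rho$ to be a weak homotopy equivalence.

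For item~\ref{enum:th:homtype:S2nxS1:lp_fol_def_retr}, the deformation is carried out in two stages. \emph{Stage~A} deforms $\dif \in \FolDiff_{+}(\SFoliation)$ to a diffeomorphism preserving $\Klein$ (i.e.\ with $\bar\dif(0)=0$): use the straight-line homotopy within the contractible space $\Diff^{+}([-1,1],\{\pm 1\})$ from $\bar\dif$ to the identity, lifted to $\FolDiff_{+}(\SFoliation)$ via a chosen transverse flow of $\SFoliation$ on $\SSnxS \setminus (\Circle_0 \cup \Circle_1)$, extended across the singular circles by the identity. A supplementary step within Stage~A further deforms $\dif$ to be \emph{level-preserving} in a bicollar $\Klein \times (-\delta,\delta)$, that is, of the form $(x,t) \mapsto (\dif|_{\Klein}(x),\, t)$ there. \emph{Stage~B} then applies the strong deformation retract of Theorem~\ref{th:homtype:Kfol}\ref{enum:th:homtype:Kfol:lp_fol_def_retr} independently on each $\SKlein_i$, relative to the bicollar, producing the required leaf-preserving diffeomorphism.

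The principal obstacle is maintaining continuity of $\dif|_{\Klein}$ across both sides throughout Stage~B, since two independent deformations could a priori perturb the boundary restriction differently on each side; this is precisely what the level-preserving preparation handles, as deforming relative to the bicollar leaves the common restriction fixed and so the two deformations glue to a well-defined homotopy in $\FolDiff_{+}(\SFoliation)$. The ``in particular'' consequences follow by combining (1) and (2) with $\Diff(\Klein) \simeq \Circle \times \bZ_2 \times \bZ_2 \simeq \sqcup_4 \Circle$ recalled in Lemma~\ref{lm:DiffK}; finally, the index-$2$ inclusion $\FolDiff_{+}(\SFoliation) \subset \FolDiff(\SFoliation)$ is witnessed by any foliated involution of $\SSnxS$ swapping $\Circle_0$ and $\Circle_1$, doubling the number of components and yielding $\FolDiff(\SFoliation) \simeq \sqcup_8 \Circle$.
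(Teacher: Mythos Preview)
Your overall strategy matches the paper's: statement~(1) is obtained by citing the general deformation results of \cite{Maksymenko:lens:2023} (your two-stage construction is essentially an unpacking of that citation), and statement~(2) is proved by showing $\rho$ is a locally trivial fibration with weakly contractible fibre. The ``in particular'' clauses are deduced the same way.

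There is, however, a genuine gap in your treatment of~(2). The ``set-theoretic identification'' $\FolLpDiff(\SFoliation) \cong \DLpK \times_{\Diff(\Klein)} \DLpK$ is \emph{not} a bijection: a pair $(h_0,h_1)$ of leaf-preserving diffeomorphisms of $\SKlein_0,\SKlein_1$ agreeing on $\Klein$ gives only a homeomorphism of $\SSnxS$, not a diffeomorphism, because the normal jets of $h_0$ and $h_1$ along $\Klein$ need not match. Consequently the fibre $\ker\rho = \FolLpDiff(\SFoliation,\Klein)$ is a \emph{proper} subgroup of $\DLpKdK\times\DLpKdK$, and weak contractibility of the latter does not automatically descend. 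The paper handles this by interposing a collar step: one first shows that the inclusion
\[
\FolLpDiff(\SFoliation,\Cman) \ \subset \ \FolLpDiff(\SFoliation,\Klein),
\]
where $\Cman$ is a bicollar of $\Klein$, is a homotopy equivalence (via the same radial reparametrisation formula~\eqref{equ:homotopy:incl_FixCollar__FixSandw} used in step~\ref{enum:incl_FixCollar__FixSandw} of Theorem~\ref{th:homtype:Kfol}). For diffeomorphisms fixed on the \emph{whole} collar the smoothness obstruction disappears, so $\FolLpDiff(\SFoliation,\Cman)$ genuinely splits as a product of two copies of $\FolLpDiff(\KFoliation,\RBd{\bConst})$, each weakly contractible by Remark~\ref{rem:DlpC08:contractible}. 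Inserting this collar step into your argument closes the gap.
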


Note that M.~Kim and F.~Raymond~\cite{KimRaymond:TrAMS:1990}, shown that $\pi_0\Diff(\SSnxS) \simeq \bZ_2\oplus\bZ_2$, and the generators of that group can be chosen to be also the generators of $\pi_0\FolLpDiff(\SFoliation)$.
This gives the following
\begin{corollary}
The inclusions $\FolLpDiff(\SFoliation) \subset \FolDiff_{+}(\SFoliation) \subset \Diff(\SSnxS)$ induce bijections on $\pi_0$ groups:
\[
\pi_0\FolLpDiff(\SFoliation) \, = \, \pi_0\FolDiff_{+}(\SFoliation) \, = \, \pi_0\Diff(\SSnxS) \, = \, \bZ_2\oplus\bZ_2.
\qedhere
\]
\end{corollary}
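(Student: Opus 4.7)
The first equality $\pi_0\FolLpDiff(\SFoliation) = \pi_0\FolDiff_{+}(\SFoliation)$ is immediate from Theorem~\ref{th:homtype:S2nxS1}\ref{enum:th:homtype:S2nxS1:lp_fol_def_retr}, since a strong deformation retraction induces a bijection on path components. The final sentence of Theorem~\ref{th:homtype:S2nxS1} identifies the homotopy type of either group with the disjoint union of $4$ circles, so their common $\pi_0$ is a group of cardinality $4$.

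For the remaining identification $\pi_0\FolDiff_{+}(\SFoliation) = \pi_0\Diff(\SSnxS)$, one invokes the theorem of Kim and Raymond~\cite{KimRaymond:TrAMS:1990} asserting $\pi_0\Diff(\SSnxS) \cong \bZ_2 \oplus \bZ_2$. The inclusion $\FolDiff_{+}(\SFoliation) \hookrightarrow \Diff(\SSnxS)$ is a continuous homomorphism of topological groups, hence induces a group homomorphism between two finite groups of the same order $4$. Therefore bijectivity reduces to surjectivity on $\pi_0$, which will simultaneously identify $\pi_0\FolLpDiff(\SFoliation)$ with $\bZ_2 \oplus \bZ_2$.

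To establish surjectivity I would exhibit two explicit leaf-preserving representatives of generators of $\pi_0\Diff(\SSnxS)$. A natural first choice is the involution induced by $(\aw, \az) \mapsto (\bar{\aw}, \az)$ on the orientable double cover $\STorus$: it commutes with the covering involution $\xi$, hence descends to each solid Klein bottle $\SKlein_i$, and preserves every level set $\Klev{t}$ because $\gfunc$ is induced from $\nrm{\aw}^2 = \func$. A second candidate is a $\xi$-equivariant fibered Dehn twist supported in a collar of the common boundary $\Klein$, which descends to a leaf-preserving diffeomorphism of $\SSnxS$. By construction both lie in $\FolLpDiff(\SFoliation) \subseteq \FolDiff_{+}(\SFoliation)$.

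The main obstacle is verifying that these two mapping classes are indeed non-trivial and independent in $\pi_0\Diff(\SSnxS) \cong \bZ_2 \oplus \bZ_2$, i.e.\ that they match (up to the group law) the Kim--Raymond generators. The cleanest route is to compute their induced actions on $\pi_1(\SSnxS)$ (equivalently on $H_1(\SSnxS;\bZ)$) directly from the coordinate description on the double cover, and compare with the outer automorphisms singled out in~\cite{KimRaymond:TrAMS:1990}. Once this matching is done, surjectivity --- hence bijectivity --- on $\pi_0$ follows and the three groups in the statement are simultaneously identified with $\bZ_2 \oplus \bZ_2$.
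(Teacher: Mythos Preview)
Your overall strategy coincides with the paper's: the paper derives the corollary from Theorem~\ref{th:homtype:S2nxS1} together with the remark (immediately preceding the corollary) that Kim--Raymond's generators of $\pi_0\Diff(\SSnxS)\cong\bZ_2\oplus\bZ_2$ can be chosen inside $\FolLpDiff(\SFoliation)$. Your reduction to surjectivity of a homomorphism between two groups of order~$4$ is exactly the right mechanism.

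There is, however, a genuine gap in your proposed verification step. You suggest distinguishing the two candidate diffeomorphisms by their induced action on $\pi_1(\SSnxS)$ (equivalently $H_1$). But $\SSnxS$ is an $S^2$-bundle over $\Circle$, so $\pi_1(\SSnxS)\cong\bZ$ and hence $\mathrm{Out}(\pi_1)\cong\bZ_2$. The action on $\pi_1$ can therefore detect at most one $\bZ_2$ factor of $\pi_0\Diff(\SSnxS)$; it cannot separate all four classes. In particular, your first candidate (induced by $(\az,\aw)\mapsto(\az,\bar{\aw})$) fixes the $\az$-coordinate and hence acts trivially on $\pi_1$, so this invariant says nothing about it. To complete the argument you would need a finer invariant---for instance the action on the orientation double cover $\Circle\times S^2$, or a direct match with the explicit representatives in~\cite{KimRaymond:TrAMS:1990}. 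The paper itself does not carry out this verification in detail; it simply asserts, citing Kim--Raymond, that their generators already lie in $\FolLpDiff(\SFoliation)$.
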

On the other hand, the homotopy type of $\Diff(\SSnxS)$ is more complicated.
It was described in E.~C\'{e}sar and C.~Rourke~\cite[Theorem~2]{CesarRourke:BAMS:1979}, which in turn extended the technnique from PhD theses by E.~C\'{e}sar~\cite{Cesar:Theses:1977}.

\section{Twisted bundles over the circle}
In this sections we present an explicit model for the solid Klein bottle $\SKlein$ and the universal covers of $\SKlein$ and $\SKlein\setminus\XC$.
These notations will be used in the proof of Theorem~\ref{th:homtype:Kfol}.

\subsection{Universal cover of the solid Klein bottle $\SKlein$}
Let $\prFromDiskToCircle:\CxR\to\bR$, $\prFromDiskToCircle(\as,\aw)=\as$, be the trivial vector bundle (of real dimension $2$), and $\tgfunc:\CxR\to\bR$ be a $\Cinfty$ function given by $\tgfunc(\as,\aw)=|\aw|^2$.
It determines a norm (or scalar product) on the fibers of $\prFromDiskToCircle$.

Consider the following $\Cinfty$ vector bundle isomorphism $(\xi,\eta)$ reversing orientation and having no fixed points:
\begin{equation}\label{equ:xi_eta}
\begin{gathered}
\xymatrix@C=3cm{
    \CxR \ar[r]^{\xi:\, (\as,\,\aw)\,\mapsto\,(\as+1,\,\bar{\aw})} \ar[d]_{\prFromDiskToCircle} &
    \CxR \ar[d]^{\prFromDiskToCircle} \\
    \bR \ar[r]^{\eta:\, \as\,\mapsto\,\as+1} &
    \bR
}
\end{gathered}
\end{equation}
It defines a free and properly discontinuous action of $\bZ$ on $\CxR$.
Denote by $\CnxS = (\CxR)/\bZ$ the quotient space.
Then $\bR/\eta\cong\Circle$ and the corresponding quotient maps $\prFromDiskToKlein:\CxR\to\CnxS$ and $\prRToCircle:\bR\to\Circle$, $\prRToCircle(\as) = e^{2\pi i \as}$, are universal coverings maps.
Moreover, we get the well-defined quotient vector bundle $\prFromKleinToCircle:\CnxS \equiv (\CxR)/\xi \to \bR/\eta \equiv \Circle$ such that the following diagram is commutative:
\[
\xymatrix{
    \CxR \ar[r]^-{\prFromDiskToKlein} \ar[d]_-{\prFromDiskToCircle} &
    \CnxS \ar[d]^-{\prFromKleinToCircle} \\
    \bR \ar[r]^-{\prRToCircle} &
    \Circle
}
\]
Evidently, $\tgfunc\circ\xi=\tgfunc$, whence there exists a unique $\Cinfty$ function $\gfunc:\CnxS\to\bR$, such that $\tgfunc = \gfunc\circ\prFromDiskToKlein$.
Then $\SKlein:=\gfunc^{-1}\bigl([0;1]\bigr)$ is the solid Klein bottle, $\prFromDiskToKlein: \DxR \to \SKlein$ is the universal cover of $\SKlein$, and $\KFoliation = \{ \Klev{\arad}:=\gfunc^{-1}(\arad)\}_{\arad\in[0;1]}$ consists of level sets of $\gfunc$.
We also that we have a norm on the fibers $\nrm{\cdot}:\CnxS \to[0;+\infty]$ given by $\nrm{\px} = \sqrt{\gfunc(\px)}$ for $\px\in\CnxS$, so $\Klev{\arad}$ consists of elements of $\CnxS$ of norm $\sqrt{\arad}$.

For $\arad\in[0;1]$ denote:
\begin{align*}
    \RSkl{\arad} &:= \gfunc^{-1}\bigl( [0;\arad] \bigr) = \{ \px\in\CnxS \mid \nrm{\px} \leq \sqrt{\arad} \}, &
    \RBd{\arad}  &:= \gfunc^{-1}\bigl( [\arad;1] \bigr).
\end{align*}
Thus $\RSkl{\arad}$ is a ``thinner'' solid Klein bottle with boundary $\Klev{\arad}$, while $\RBd{\arad}$ is a collar of the boundary Klein bottle $\partial\SKlein$.
In particular, $\XC$ is a circle being also the zero section of $\prFromKleinToCircle$.

\subsection{Universal cover of $\SKlein\setminus\XC$}
Consider also another universal covering map
\[
    \prFromPlaneToDisk:\bR^2\times(0;+\infty) \to \CzxR,
    \qquad
    \prFromPlaneToDisk(\as,\aphi,\arad) = (\as, \arad e^{2\pi i \phi}).
\]
Then the composition:
\[
    \prFromPlaneToKlein:
    \bR^2\times(0;1]  \xrightarrow{~\prFromPlaneToDisk~}
    \DzxR \xrightarrow{~\prFromDiskToKlein~}
    \SKlein\setminus\XC
\]
is the universal covering map for $\SKlein\setminus\XC$.
Evidently, for each leaf $\Klev{\arad}$, $\arad>0$, its inverse $\prFromPlaneToKlein^{-1}(\Klev{\arad})$ is the horizontal plane $\bR^2\times\arad$.
In other words, the composition $\ttgfunc:= \tgfunc\circ\prFromPlaneToDisk:\bR^2\times(0;+\infty)\to(0;+\infty)$ is just the coordinate projection, $\ttgfunc(\as,\aphi,\arad)=\arad$.

One easily checks that the group of covering translations of $\prFromPlaneToKlein$ is generated by the following diffeomorphisms
\begin{equation}\label{equ:ha_hb}
\begin{gathered}
    \ha,\hb:\bR^2\times(0+\infty) \to \bR^2\times(0+\infty), \\
\begin{aligned}
    \ha(\as,\aphi,\arad) &= (\as, \aphi+1, \arad), &\qquad
    \hb(\as,\aphi,\arad) &= (\as+1, -\aphi, \arad),
\end{aligned}
\end{gathered}
\end{equation}
and that the following identities holds:
\begin{align*}
    &\prFromPlaneToDisk\circ\ha=\prFromPlaneToDisk,&
    &\prFromPlaneToDisk\circ\hb=\xi\circ\prFromPlaneToDisk,&
    &\hb\circ\ha = \ha^{-1}\circ\hb.
\end{align*}

Let us collect all those spaces and maps into the following commutative diagram:
\begin{equation}\label{equ:coverings}
\begin{gathered}
\xymatrix{
    \ \bR^2\times(0;1]               \
    \ar[d]_-{\prFromPlaneToDisk}
    \ar@{^(->}[rr]
    \ar@/_10ex/[dd]_-{\prFromPlaneToKlein} &&
    \ \bR^2\times(0;+\infty) \
    \ar[d]_-{\prFromPlaneToDisk}
    \ar@/^1ex/[rdd]^-{\ttgfunc}
    \\
    \ \DzxR                             \ \ar@{^(->}[r] \ar[d]_-{\prFromDiskToKlein} &
    \ \DxR                              \ \ar@{^(->}[r] \ar[d]_-{\prFromDiskToKlein} &
    \ \CxR                              \ \ar[d]_-{\prFromDiskToKlein} \ar[dr]^-{\tgfunc} \\
    \ \SKlein\setminus\XC               \  \ar@/_2ex/[dr]^-{\prFromKleinToCircle}  \ar@{^(->}[r] &
    \ \SKlein=\gfunc^{-1}([0;1])        \ \ar[d]^-{\prFromKleinToCircle} \ar@{^(->}[r] &
    \ \CnxS                             \ \ar@/^2ex/[dl]_-{\prFromKleinToCircle} \ar[r]^-{\gfunc} &
    \ \bR                               \ \\
    &
    \ \Circle \
}
\end{gathered}
\end{equation}

\begin{figure}[ht]
\includegraphics[width=7cm]{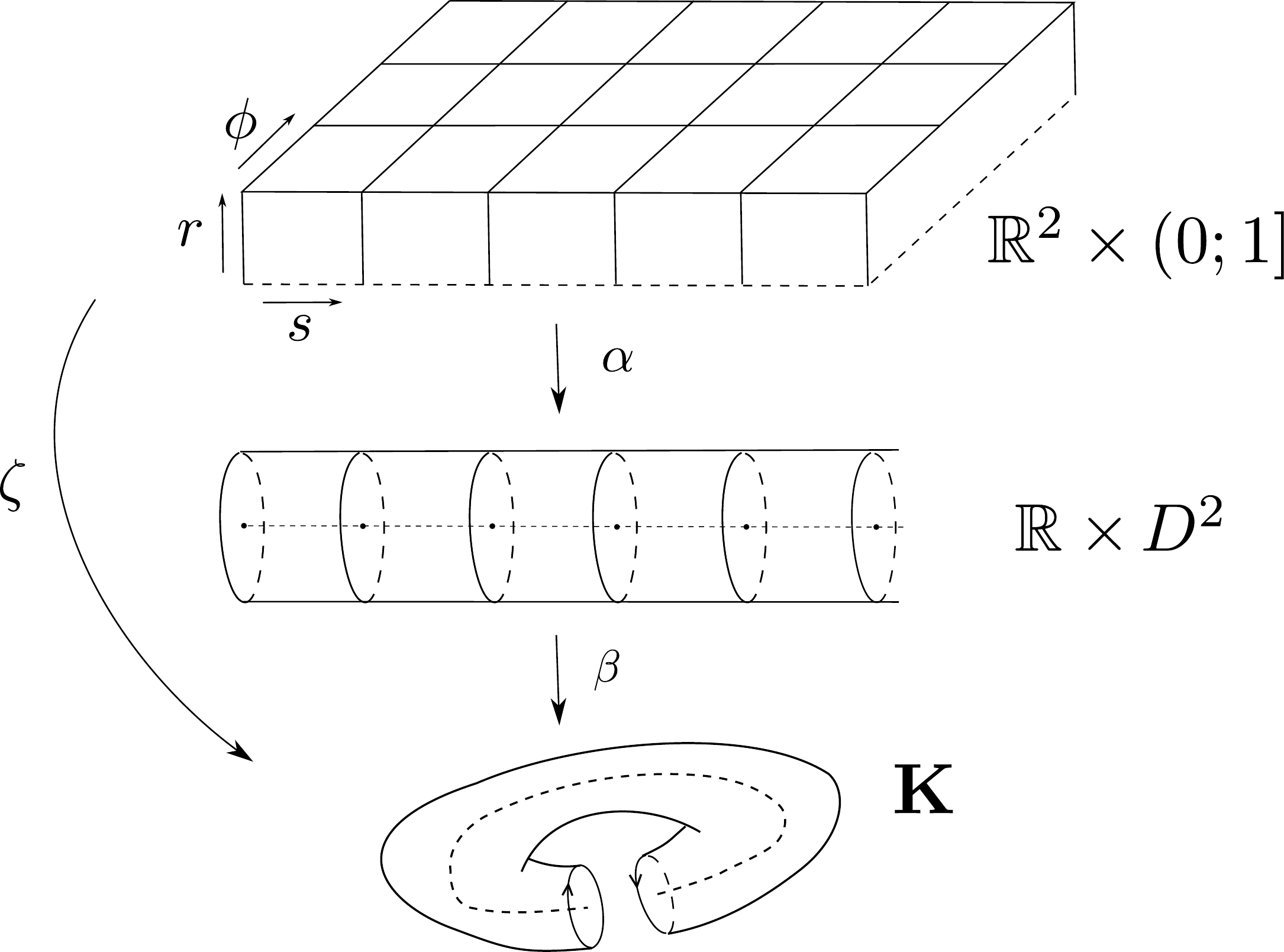}
\caption{Universal coverings of $\SKlein$ and $\SKlein\setminus\XC$}
\label{fig:all_coverings}
\end{figure}

\subsection{Liftings of diffemorphisms}
We will also use the following simple statement concerning covering maps:
\begin{lemma}\label{lm:lifts_commuting_with_covering_slices}
Let $\beta:\Xman\to\Yman$ be a covering map between path connected topological spaces, and $\xi:\Xman\to\Xman$ be any covering transformation, i.e.\ a homeomorphism satisfying $\beta\circ\xi=\beta$.
Let also $\dif:\Yman\to\Yman$ be a continuous map having a lifting $\tdif:\Xman\to\Xman$, so $\beta\circ\tdif = \dif\circ\beta$.
Then each of the following conditions implies that $\tdif$ commutes with $\xi$, i.e.\ $\xi\circ\tdif=\tdif\circ\xi$.
\begin{enumerate}[label={\rm(\arabic*)}]
\item\label{enum:lm:lifts_commuting_with_covering_slices:at_pt}
There exists a point $\px\in\Xman$ such that $\xi\circ\tdif(\px)=\tdif\circ\xi(\px)$.
\item\label{enum:lm:lifts_commuting_with_covering_slices:on_a_set}
There exists a subset $\Aman\subset\Yman$ being invariant under $\xi$, i.e.\ $\xi(\Aman)=\Aman$, and such that $\tdif$ is fixed on $\Aman$.
\end{enumerate}
\end{lemma}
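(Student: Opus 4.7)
The plan is to reduce both cases to the standard uniqueness-of-lifts theorem for covering maps. Specifically, I would introduce the two continuous self-maps of $\Xman$
\[
g_1 := \tdif\circ\xi, \qquad g_2 := \xi\circ\tdif,
\]
and note that the conclusion $\xi\circ\tdif=\tdif\circ\xi$ is precisely $g_1=g_2$. Using the two defining identities $\beta\circ\xi=\beta$ and $\beta\circ\tdif=\dif\circ\beta$, a one-line check yields
\[
\beta\circ g_1 \;=\; \dif\circ\beta\circ\xi \;=\; \dif\circ\beta \;=\; \beta\circ\tdif \;=\; \beta\circ g_2,
\]
so $g_1$ and $g_2$ are both continuous lifts of the single map $\dif\circ\beta:\Xman\to\Yman$ through the covering $\beta$. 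Since $\Xman$ is path connected — and locally path connected in every intended application, where it is a smooth manifold — the unique lifting theorem then reduces the entire problem to exhibiting a single point of $\Xman$ at which $g_1$ and $g_2$ agree.

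Hypothesis \ref{enum:lm:lifts_commuting_with_covering_slices:at_pt} supplies such a point outright. For hypothesis \ref{enum:lm:lifts_commuting_with_covering_slices:on_a_set}, I would pick any $\px$ in the $\xi$-invariant set $\Aman$ on which $\tdif$ restricts to the identity and compute $g_2(\px)=\xi(\tdif(\px))=\xi(\px)$, and also $g_1(\px)=\tdif(\xi(\px))=\xi(\px)$, where the last equality uses that $\xi(\px)\in\xi(\Aman)=\Aman$. Hence hypothesis (2) reduces cleanly to (1) and the lemma follows.

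The main thing worth checking, rather than a genuine obstacle, is the standing hypothesis for the unique lifting theorem: that $\Xman$ be connected and locally path connected. Both are automatic throughout this paper, where $\Xman$ is always a smooth manifold such as $\CxR$, $\DxR$, or $\bR^2\times(0,+\infty)$ appearing in diagram~\eqref{equ:coverings}, so no additional work is required on that side.
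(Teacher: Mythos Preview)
Your proposal is correct and follows essentially the same approach as the paper: both arguments show that $\tdif\circ\xi$ and $\xi\circ\tdif$ are lifts of the same map through $\beta$, then invoke the uniqueness-of-lifts theorem once a single point of agreement is exhibited, with case~\ref{enum:lm:lifts_commuting_with_covering_slices:on_a_set} reduced to case~\ref{enum:lm:lifts_commuting_with_covering_slices:at_pt} via the same computation at any $\px\in\Aman$. Your write-up is in fact slightly cleaner in that it names the common base map $\dif\circ\beta$ explicitly and flags the local path connectedness hypothesis needed for uniqueness of lifts, which the paper leaves implicit.
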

\begin{proof}
\ref{enum:lm:lifts_commuting_with_covering_slices:at_pt}
Notice that $\xi\circ\tdif$ and $\tdif\circ\xi$ are also liftings of $\dif$, since
\begin{align*}
    &\xi\circ\tdif\circ\beta =  \xi\circ\beta\circ\dif = \beta\circ\dif,&
    &\tdif\circ\xi\circ\beta =  \tdif\circ\beta = \beta\circ\dif.
\end{align*}
Moreover, by assumption, they coincide at $\px$.
Then by uniqueness of liftings with one prescribed value, they should identically coincide on all of $\Xman$.

\ref{enum:lm:lifts_commuting_with_covering_slices:on_a_set}
Let $\px\in\Aman$ be any point.
Then, by assumption, $\xi(\px)\in\Aman$.
As $\tdif$ is fixed on $\Aman$ we have that $\tdif\circ\xi(\px) = \xi(\px) = \xi\circ\tdif(\px)$.
Hence by~\ref{enum:lm:lifts_commuting_with_covering_slices:at_pt}, $\tdif\circ\xi \equiv \xi\circ\tdif$ on all of $\Xman$.
\end{proof}

Notice that each $\dif\in\Diff(\SKlein,\partial\SKlein)$ lifts to a unique diffeomorphism $\tdif:\DxR\to \DxR$ fixed on $\SxR$, so $\prFromDiskToKlein\circ\tdif = \dif\circ\prFromDiskToKlein$.
Since $\SxR$ is invariant under $\xi$, it follows from Lemma~\ref{lm:lifts_commuting_with_covering_slices} that $\tdif$ commutes with $\xi$.

Moreover, suppose in addition that $\dif(\XC)=\XC$, $\tdif(\ZxR)=\ZxR$, then the restriction $\tdif|_{\DzxR}$ lifts in turn to a unique diffeomorphism $\ttdif:\bR^2\times(0;1]\to\bR^2\times(0;1]$ fixed on $\bR^2\times1$, so
\[ \prFromPlaneToDisk\circ\ttdif = \tdif\circ\prFromPlaneToDisk:\bR^2\times(0;1] \to \DzxR. \]
Again, since $\bR^2\times1$ is invariant under $\ha$ and $\hb$, we get from Lemma~\ref{lm:lifts_commuting_with_covering_slices} that $\ttdif$ commutes with $\ha$ and $\hb$.
These liftings $\tdif$ and $\ttdif$ of $\dif$ will play an important role for our proofs.

Let us also mention that the group $\DLpKdK$ can be defined as the subgroup of $\Diff(\SKlein,\partial\SKlein)$ consisting of diffeomorphisms preserving the function $\gfunc$, i.e.\ satisfying the identity: $\gfunc\circ\dif=\gfunc$.
Moreover, if $\dif\in\DLpKdK$, both liftings $\tdif$ and $\ttdif$ are defined and they satisfy the following idenitities: $\tgfunc\circ\tdif=\tgfunc$ and $\ttgfunc\circ\ttdif=\ttgfunc$.

\section{Proof of Theorem~\ref{th:homtype:Kfol}}\label{sect:proof:th:homtype:Kfol}
The second statement \ref{enum:th:homtype:Kfol:lp_fol_def_retr} that the pair $\bigl(\DLpK, \DLpKdK \bigr)$ is a strong deformation retract of $\bigl(\DFolK, \DLpKdK \bigr)$ is a particular case of results from~\cite{Maksymenko:lens:2023}.

For the proof of~\ref{enum:th:homtype:Kfol:lp_dK_contr} we need to define an explicit model for the solid Klein bottle $\SKlein$ and the universal covers of $\SKlein$ and $\SKlein\setminus\XC$.

\ref{enum:th:homtype:Kfol:lp_dK_contr}
The proof of contractibility of the group $\DLpKdK$ almost literally follows the proof of main result from~\cite{KhokhliukMaksymenko:lens:2022} for similar foliation on the solid torus, since the results of that paper are proved in a greater generality and are applicable in the current situation.
For the convenience of the reader we will repeat certain arguments.
Namely, we will define four subgroups of $\DLpKdK$:
\[
    \fGFixCollar      \ \subset \
    \fGFixSandw       \ \subset \
    \fGLin            \ \subset \
    \fGFixZR          \ \subset \
    \fGTotal = \DLpKdK
\]
and show%
\footnote{We also ``reorder'' here the groups $\fGrp{i}$ in comparison with their counterparts from~\cite{KhokhliukMaksymenko:lens:2022}.
Namely, we will make our diffeomorphisms fixed on the collar of $\partial\SKlein$ at the fourth step, while in~\cite{KhokhliukMaksymenko:lens:2022} that was done from the beginning.
This will slightly simplify the exposition.}
that \term{all the inclusions are homotopy equivalences, while the smallest group $\fGFixCollar$ is weakly contractible}.

To proceed with the proof it will be convenient to fix some $\Cinfty$ function $\mu:[0;1]\to[0;1]$ such that $\mu=0$ on $[0;\aConst]$ and $\mu=1$ on $[0;\bConst]$.

\begin{enumerate}[wide, label={\rm\arabic*)}, topsep=1ex, itemsep=1ex]

\item
{\bf Inclusion $\fGFixZR\subset\fGTotal$}.

Let us define the group $\fGFixZR$.
Let $\dif\in\fGTotal = \DLpKdK$, and $\tdif:\DxR\to\DxR$ be its unique lifting fixed on $\SxR$.
Since $\dif(\XC)=\XC$, it follows that $\tdif(\ZxR)=\ZxR$, so there exists an orientation preserving diffeomorphism $\sigma(\dif):\bR \to \bR$ commuting with $\xi$ and such that
\[
    \tdif(\as,0) = \bigl(\sigma(\dif)(\as), \ 0 \bigr), \ \as\in\bR.
\]
is $\ZxR$ is also invariant under $\xi$.
In particular,
\begin{align*}
    &\tdif\circ\xi(\as, 0) =\tdif(\as+1,           0) = \bigl(\sigma(\dif)(\as+1),  0 \bigr), \\
    &\xi\circ\tdif(\as, 0) =\xi(\sigma(\dif)(\as), 0) = \bigl(\sigma(\dif)(\as)+1,  0 \bigr),
\end{align*}
so $\sigma(\dif)(\as+1)=\sigma(\dif)(\as)+1$ for all $\as\in\bR$.
Let $\DiffRCommShift$ be the group of all orientation preserving diffeomorphisms $\qdif$ of $\bR$ satisfying the identity $\qdif(\as+1)=\qdif(\as)+1$ for all $\as\in\bR$.
Then the correspondence $\dif\mapsto \sigma(\dif)$ is a well-defined map $\sigma: \fGTotal \to \DiffRCommShift$.
One easily check that $\sigma$ is a continuous homomorphism.

Let $\fGFixZR := \ker(\sigma)$ be the kernel of $\sigma$.
Thus $\fGFixZR$ consists of $\KFoliation$-leaf preserving diffeomorphisms $\dif$ such that their unique lifting $\tdif$ to the universal cover $\DxR$ fixed on $\SxR$ is also fixed on $\ZxR$.

We claim that \term{$\fGFixZR$ is a strong deformation retract of $\fGTotal$}.
The proof it easy and consists of two statements.
Let us recall the arguments from~\cite{KhokhliukMaksymenko:lens:2022}.
\begin{enumerate}[label={\rm\alph*)}, wide]
\item\label{enum:def_G1_G2:section}
\textit{$\sigma$ admits a continuous section $s:\DiffRCommShift \to \DLpKdK = \fGTotal$ satisfying $s(\id_{\bR})=\id_{\SKlein}$.}
Thus $s$ is a continuous map (not necessarily a homomorphism) such that $\sigma\circ s(\gdif) = \gdif$ for all $\gdif\in\DiffRCommShift$.

Indeed, note that each $\qdif\in\DiffRCommShift$ extends to a diffeomorphism
\[
    \hat{\qdif}:\DxR\to \DxR,
    \qquad
    \hat{\qdif}(\aw,\as) = \bigl(\aw,   \mu(\nrm{\aw}) \qdif(\as) + (1-\mu(\nrm{\aw}) \as)
   \bigr)
\]
fixed even on the set $\vbp^{-1}(\RBd{\bConst}) = \{ (\aw,\as) \mid \nrm{\aw} \in [\bConst;1]\}$.
One easily checks that $\hat{\qdif}$ also commutes with $\xi$, and preserves the sets $\vbp^{-1}(\Klev{\arad})$, $\arad\in[0;1]$, being the inverses of the leaves of $\KFoliation$.
Hence $\hat{\qdif}$ yields a unique diffeomorphism $s(\qdif)$ of $\SKlein$ preserving the leaves of $\KFoliation$ and fixed on $\RBd{\bConst}$.
In other words, $s(\qdif) \in \DLpKdK = \fGTotal$.
Moreover, $\hat{\qdif}$ is in turn a unique lifting of $s(\qdif)$ fixed on $\SxR$, whence the correspondence $\qdif\mapsto s(\qdif)$ is the desired section of $\sigma$.

\item\label{enum:def_G1_G2:Diff_eta_contr}
\textit{The group $\DiffRCommShift$ is convex, and therefore contractible into the point $\id_{\bR}$ via the ``stantard'' homotopy:}
\[
    H:\DiffRCommShift\times[0;1]\to\DiffRCommShift,
    \qquad
    H(\qdif,t) = (1-t)\qdif + t\,\id_{\bR}.
\]
\end{enumerate}
Then~\ref{enum:def_G1_G2:section} allows to construct the following homeomorphism
\begin{gather*}
    \eta:\fGFixZR     \times \DiffRCommShift \ \equiv \
         \ker(\sigma) \times \DiffRCommShift \ \to    \
         \FolLpDiff(\KFoliation, \RBd{\bConst}),
    \qquad
    \eta(\dif, \qdif) = \dif \circ s(\qdif).
\end{gather*}
which is ``fixed on $\fGFixZR$'' in the sense that $\eta(\dif,\id_{\bR}) = \dif$ for all $\dif\in\fGFixZR$.
As $\DiffRCommShift$ is contractible into the point $\id_{\bR}$, it now follows that $\fGFixZR$ is a strong deformation retract of $\fGTotal=\FolLpDiff(\KFoliation, \RBd{\bConst})$.
We refer the reader to~\cite{KhokhliukMaksymenko:lens:2022} for the details.

\newcommand\Diskyr[2]{D_{#2}(#1)}
\item
{\bf Inclusion $\fGLin\subset\fGFixZR$}.

Define $\fGLin$ to be the subgroup of $\fGFixZR$ consisting of diffeomorphisms $\dif$ \term{coinciding with some vector bundle morphism $\qdif:\CnxS\to\CnxS$ on $\RSkl{\aConst}$}.

To see what this means consider the standard disk bundle $\prFromKleinToCircle:\SKlein\to\Circle$ from~\eqref{equ:coverings}, and for every $\py\in\Circle$ and $\arad\in(0;1]$ denote by
\[
    \Diskyr{\py}{\arad} = \prFromKleinToCircle^{-1}(\py) \cap \RSkl{\arad}
\]
the closed $2$-disk of radius $\arad$ in the fibre over $\py$.
Notice that the intersections of $\Diskyr{\py}{\arad}$ with the leaves of $\GFoliation$ constitute the partition of $\Diskyr{\py}{\arad}$ into concentric circles.
The following lemma is easy and directly follows from definitions of the above covering maps.
\begin{lemma}\label{lm:cond_B3}
Let $\dif\in\fGFixZR$ then the following conditions are equivalent:
\begin{enumerate}[topsep=1ex, itemsep=1ex, label={\rm(\arabic*)}, leftmargin=8ex]
\item\label{enum:cond_B3:h_in_b3}
$\dif\in\fGLin$;
\item\label{enum:cond_B3:h}
$\dif(\Diskyr{\py}{\aConst}) = \Diskyr{\py}{\aConst}$ for each $\py\in\Circle$ and the restriction $\dif:\Diskyr{\py}{\arad} \to \Diskyr{\py}{\arad}$ is a rotation (i.e.\ a linear isomorphism preserving concentric circles);

\item\label{enum:cond_B3:th}
there exists a $\Cinfty$ function $\liftFunc{\dif}:\bR\to\bR$ such that
\begin{align*}
&\tdif(\aw,\as) = (\aw e^{2\pi i \liftFunc{\dif}(\as)},\as),  &
&\liftFunc{\dif}(\as+1)=-\liftFunc{\dif}(\as),
\end{align*}
for all $\as\in\bR$ and $\aw\in D^2$ with $\nrm{\aw}\leq \aConst$;

\item\label{enum:cond_B3:tth}
there exists a $\Cinfty$ function $\liftFunc{\dif}:\bR\to\bR$ such that
\begin{align*}
&\ttdif(\as,\aphi,\arad) = (\as, \aphi + \liftFunc{\dif}(\as),\arad),  &
&\liftFunc{\dif}(\as+1)=-\liftFunc{\dif}(\as),
\end{align*}
for all $\as\in\bR$ and $\arad\in(0;\aConst]$;
\end{enumerate}
In this case such a function $\liftFunc{\dif}$ in~\ref{enum:cond_B3:th} and~\ref{enum:cond_B3:tth} is the same, and it is also unique.
\qed
\end{lemma}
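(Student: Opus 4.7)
The plan is to prove the cycle of implications $(1)\Rightarrow(2)\Rightarrow(3)\Rightarrow(4)\Rightarrow(1)$, extracting uniqueness of $\liftFunc{\dif}$ along the way. The equivalence $(1)\Leftrightarrow(2)$ I would treat as essentially formal: a vector bundle morphism $\qdif:\CnxS\to\CnxS$ restricts to a linear endomorphism on each fibre of $\prFromKleinToCircle$, and since $\dif\in\DLpKdK$ preserves $\KFoliation$, this endomorphism must also preserve the concentric-circle partition $\Diskyr{\py}{\aConst}\cap\Klev{\arad}$ of each disk $\Diskyr{\py}{\aConst}$. These two constraints force the restriction to lie in $\SO(2)$, i.e.\ to be a rotation. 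The converse just observes that a smooth family of rotations on the fibre disks is automatically a vector bundle morphism.

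The central step is $(2)\Rightarrow(3)$. I would pass to the unique lift $\tdif:\DxR\to\DxR$ of $\dif$ fixed on $\SxR$ constructed before the lemma. Because $\dif\in\fGFixZR$, $\tdif$ fixes $\ZxR$, and because $\prFromDiskToKlein$ restricts to a diffeomorphism on each fibre disk, $\tdif$ acts as a rotation of $\{\nrm{\aw}\leq\aConst\}$ over each $\as\in\bR$ --- say by $2\pi\tilde\theta(\as)$ with $\tilde\theta:\bR\to\bR/\bZ$ smooth. Since $\bR$ is simply connected I can lift $\tilde\theta$ to a smooth $\liftFunc{\dif}:\bR\to\bR$, unique up to an additive integer. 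Computing both sides of the commutation $\tdif\circ\xi=\xi\circ\tdif$ (established before the lemma via Lemma~\ref{lm:lifts_commuting_with_covering_slices}) on $(\aw,\as)$, and using that $\xi$ is fibrewise complex conjugation, yields $\liftFunc{\dif}(\as+1)\equiv-\liftFunc{\dif}(\as)\pmod{\bZ}$. Then if both $\liftFunc{\dif}$ and $\liftFunc{\dif}+n$ were to satisfy the antisymmetry on the nose, $n=-n$ would force $n=0$, which simultaneously selects a unique lift realising the equality and proves the uniqueness clause of the lemma.

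For $(3)\Leftrightarrow(4)$, I would plug into diagram~\eqref{equ:coverings}. The polar cover is $\prFromPlaneToDisk(\as,\aphi,\arad)=(\as,\arad e^{2\pi i \aphi})$, and the defining relation $\tdif\circ\prFromPlaneToDisk=\prFromPlaneToDisk\circ\ttdif$ together with the rotation formula from~(3) forces the angular shift $\aphi\mapsto\aphi+\liftFunc{\dif}(\as)$ on $\bR^2\times(0;\aConst]$, yielding~(4) with the same function $\liftFunc{\dif}$; the antisymmetry transfers automatically, and the condition that $\ttdif$ be fixed on $\bR^2\times\{1\}$ singles out the unique lift. Finally $(3)\Rightarrow(1)$ is a direct check: the formula exhibits $\tdif$ on the tube $\{\nrm{\aw}\leq\aConst\}$ as fibrewise $\bC$-linear and $\xi$-equivariant, hence it descends to a vector bundle morphism of $\prFromKleinToCircle$ coinciding with $\dif$ on $\RSkl{\aConst}$.

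The main obstacle I expect is the careful bookkeeping around the integer ambiguity in the lift $\liftFunc{\dif}$: it is the $\xi$-antisymmetry relation itself, rather than a choice made by hand, that pins down a unique real-valued lift, and this must be stated cleanly to avoid circularity. Everything else is a diagram chase through~\eqref{equ:coverings} once the coordinate conventions are fixed.
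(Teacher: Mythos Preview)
Your proposal is correct and considerably more detailed than the paper's own argument, which is essentially just the sentence ``equivalence of conditions (1)--(4) is easy; it also follows from (4) that $\liftFunc{\dif}$ is uniquely determined by $\ttdif$.''  So there is nothing to compare at the level of strategy --- you have supplied the diagram chase the paper declines to write out.

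Two small points worth tightening.  First, in $(1)\Rightarrow(2)$, linearity together with preserving concentric circles only gives $\Ort(2)$; the fact that the fibrewise map is a rotation (i.e.\ lies in $\SO(2)$) uses that the lift $\tdif$ is orientation-preserving on $\DxR$, which follows since $\tdif$ is the identity on $\SxR$.  Second, and this is the point you yourself flag as the obstacle: commutation of $\tdif$ with $\xi$ gives only $\liftFunc{\dif}(\as+1)\equiv -\liftFunc{\dif}(\as)\pmod{\bZ}$, so $\liftFunc{\dif}(\as+1)+\liftFunc{\dif}(\as)=m$ for some fixed integer $m$, and replacing $\liftFunc{\dif}$ by $\liftFunc{\dif}+n$ changes $m$ to $m+2n$.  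Your uniqueness observation (``$n=-n$ forces $n=0$'') is fine, but existence of a lift with $m=0$ needs the parity of $m$ to be even, and this is exactly what the globally defined second lift $\ttdif$ (fixed on $\bR^2\times\{1\}$ and commuting with $\hb$) provides.  You do invoke precisely this ingredient in your $(3)\Leftrightarrow(4)$ step, so the argument is all there; it would just read more cleanly if you routed the chain as $(2)\Rightarrow(4)\Rightarrow(3)$, reading $\liftFunc{\dif}$ directly off $\ttdif$ and obtaining the exact antisymmetry from $\ttdif\circ\hb=\hb\circ\ttdif$, which is also how the paper phrases the uniqueness.
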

\begin{proof}
Equivalence of conditions~\ref{enum:cond_B3:h_in_b3}-\ref{enum:cond_B3:tth} is easy.
It also follows from~\ref{enum:cond_B3:tth} that $\liftFunc{\dif}$ is uniquely determined by $\ttdif$.
\end{proof}

Now, by ``linearization theorem''~\cite{KhokhliukMaksymenko:fol_nbh:2022}, the inclusion $\fGLin \subset \fGFixZR$ is a homotopy equivalence.
Notice that in our situation $\KFoliation$ consists of level sets of a positive definite $2$-homogeneous on fibers function $\gfunc$.
In this case the proof of that ``linearization theorem'' can be simplified as it was shown in~\cite[Theorem~3.1.2]{KhokhliukMaksymenko:lens:2022}.

More precisely, the deformation of $\fGFixZR$ into $\fGLin$ can be defined as follows.
Let $\Uman$ be a neighborhood of the central circle $\XC$ (i.e.\ the zero section of $\vbp:\CnxS\to\Circle$) and $\dif:\Uman\to\CnxS$ a smooth embedding such that $\dif(\XC) = \XC$, but it is not necessarily fixed on $\XC$.
Then one can define the following vector bundle isomorphism
\[
\tfib{\dif}:\CnxS\to\CnxS,
\qquad
\tfib{\dif}(\px) = \lim\limits_{t\to0}\tfrac{1}{t} \dif(t\px),
\]
which can be regarded as a ``partial derivative'' of $\dif$ at points of $\XC$ in the direction of fibers of the vector bundle $\vbp:\CnxS\to\Circle$, see~\cite{KhokhliukMaksymenko:fol_nbh:2022}.
In particular, $\tfib{\dif}$ is well defined for all $\dif\in\fGFixZR$.
It is easy to see that if $\dif\in\fGLin$, so it coincides with some vector bundle morphism $\qdif$ on $\RSkl{\aConst}$, then $\tfib{\dif} \equiv \qdif$ on all of $\CnxS$.

Define also the following function
\[
    \phi:[0;1]\times(\CnxS)\to\bR,
    \qquad
    \phi(t,\px) = t + (1-t)\mu(\nrm{\px}).
\]
Evidently, $\phi(t,\px)=0$ exactly on the set $0\times\RSkl{\aConst}$.
Now a deformation $H:\fGFixZR\times[0;1]\to\fGFixZR$ of $\fGFixZR$ into $\fGLin$ can be given by the following formula:
\[
H(\dif,t)(\px) =
\begin{cases}
\frac{\dif(\phi(t,\px)\px)}{\phi(t,\px)}, &  (t,\px)\in\bigl([0;1]\times\SKlein\bigr) \setminus \bigl(0\times\RSkl{\aConst}\bigr), \\
\tfib{\dif}(\px), & (t,\px)\in0\times\RSkl{\aConst}.
\end{cases}
\]

Inbdeed, one can show (using Hadamard lemma) that $H(\dif,t)$ is a diffeomorphism of $\SKlein$ belonging to $\fGFixZR$ for all $(\dif,t)\in\fGFixZR\times[0;1]$, and the map $H$ is continuous with respect to the corrresponding $\Cinfty$ Whitney topologies, see~\cite[Theorem~3.1.2]{KhokhliukMaksymenko:lens:2022}.
Moreover, it is evident, that $H_1(\dif) = \dif$, and $H_0(\dif)|_{\RSkl{\aConst}} = \tfib{\dif}|_{\RSkl{\aConst}}$.
The latter means that $H_0(\dif)$ coincides with the vector bundle morphism $\tfib{\dif}$ on $\RSkl{\aConst}$, and thus $H_0(\dif)$ belongs to $\fGLin$.
Finally, if $\dif$ is already in $\fGLin$, then it easily follows from the formulas for $H$, that $H_t(\dif) = \dif = \tfib{\dif}$ on $\RSkl{\aConst}$.
Thus $H$ is a homotopy of $\fGFixZR$ which deforms $\fGFixZR$ into $\fGLin$ and leaves $\fGLin$ invariant.
This means that $H$ is a deformation of $\fGFixZR$ into $\fGLin$, and therefore the inclusion $\fGLin \subset \fGFixZR$ is a homotopy equivalence whose homotopy inverse is the map $H_0:\fGFixZR\to\fGLin$.

\item
{\bf Inclusion $\fGFixSandw\subset\fGLin$}.

Denote by $\CRRaa$ the subset of the space $\Ci{\bR}{\bR}$ consisting of functions $\delta:\bR\to\bR$ satisfying the identity $\delta(\as+1)+\delta(\as)\equiv 0$ for all $\as\in\bR$.

Recall that to each $\dif\in\fGLin$ one can associate a unique $\Cinfty$ function $\liftFunc{\dif}:\bR\to\bR$ satisfying conditions of Lemma~\ref{lm:cond_B3}.
In particular, $\liftFunc{\dif}\in\CRRaa$, and thus we get a well-defined map $\liftFunc{}:\fGLin\to\CRRaa$.
One easily checks $\liftFunc{\dif_1 \circ\dif_2} = \liftFunc{\dif_1} + \liftFunc{\dif_2}$ for all $\dif_1,\dif_2\in\fGLin$, so the correspondence $\dif\mapsto\liftFunc{\dif}$ is a continuous homomorphism of topological groups.

Let $\fGFixSandw := \ker(\liftFunc{})$ be the kernel of $\liftFunc{}$, so it consists of elements $\dif\in\fGLin$ for which $\liftFunc{\dif}\equiv 0$, i.e.\ $\ttdif$ is fixed on $\bR^2\times(0;\aConst]$.
One easily checks that the following two statements hold.
\begin{enumerate}[label={\rm\alph*)}, leftmargin=*]
\item {\em $\CRRaa$ is convex and therefore contractible.}
\item {\em The homomorphism $\liftFunc{}:\fGLin\to\CRRaa$ admits a continuous section $s:\CRRaa \to \fGLin$.}
Actually, for each $\delta\in\CRRaa$ we have the following diffeomorphism
\[
    \tdif_{\delta}:\DxR\to\DxR,
    \qquad
    \tdif_{\delta}(\as,\aw) = \bigl(\as, \aw e^{2\pi i (1-\mu(\arad)) \delta(\as)} \bigr).
\]
Evidently, it is fixed near $\SxR$, fixed on $\ZxR$, commutes with the covering translation $\xi:\DxR\to\DxR$ and preserves the function $\tgfunc$.
Hence $\tdif_{\delta}$ yields a unique diffeomorphism $\dif_{\delta}:\SKlein\to\SKlein$, which is fixed near $\partial\SKlein$ and preserves $\gfunc$.
In turn, $\tdif_{\delta}$ is lifting of $\dif_{\delta}$ and is of the form~\ref{enum:cond_B3:th} of Lemma~\ref{lm:cond_B3}.
Hence $\dif_{\delta}\in\fGLin$.
\end{enumerate}
Now, similarly to the proof for the inclusion $\fGFixZR\subset\fGTotal$, these conditions imply that $\fGFixSandw$ is a strong deformation retract of $\fGLin$.

\item\label{enum:incl_FixCollar__FixSandw}
{\bf Inclusion $\fGFixCollar\subset\fGFixSandw$}.

Let $\fGFixCollar$ be the subgroup of $\fGFixSandw$ consisting of $\KFoliation$-leaf preserving diffeomorphisms fixed on the collar $\RBd{\bConst}$.
Then \term{the inclusion $\fGFixCollar \subset \fGFixSandw$ is a homotopy equivalence}.
Indeed, the deformation $\Hhom:\fGFixSandw\times[0;1]\to\fGFixSandw$ of $\fGFixSandw$ into $\fGFixCollar$ can be given by:
\begin{equation}\label{equ:homotopy:incl_FixCollar__FixSandw}
\Hhom(\dif,t)(\px) =
\begin{cases}
\dif\bigl((t\nrm{\px} + (1-t)\mu(\nrm{\px})) \frac{\px}{\nrm{\px}} \bigr), & \nrm{\px}>0, \\
\dif(\px), & \nrm{\px}\leq \aConst.
\end{cases}
\end{equation}
Thus $\Hhom_{1}(\dif)=\dif$, $\Hhom_{0}$ is fixed on $\RBd{\bConst}$.

\item
{\bf Weak contractibility of $\fGFixCollar$}.

Similarly to the last step of the proof of~\cite[Theorem~1.1.1]{KhokhliukMaksymenko:lens:2022} the group $\fGFixCollar$ can be embedded into the loop space of the group of diffeomorphisms of $\Klein$, and that inclusion is a weak homotopy equivalence.

Namely, for each $\dif\in\fGFixCollar$ define the following path $\gamma_{\dif}:[0;1]\to\Diff(\Klein)$ given by
\[
\gamma_{\dif}(t)(\px) =
\begin{cases}
\px, & t=0, \\
\tfrac{1}{t}\dif(t\px), & t\in(0;1].
\end{cases}
\]
Since $\dif$ is fixed near $\XC$ and preserves ``parallel'' Klein bottles $\Klev{t}$, $t\in(0;1]$, it follows that $\gamma_{\dif}$ is a well-defined continuous \term{loop} such that $\gamma(t)=\id_{\Klein}$ for $t\in[0;\aConst]\cup[\bConst;1]$.

Moreover, the additional assumptions that for each $\dif\in\fGFixCollar$
\begin{itemize}[label={$- $}]
\item its lifting $\tdif$ is fixed near $\ZxR$
\item while the lifting $\ttdif$ of $\dif|_{\SKlein\setminus\XC}$ is fixed on $\bR^2\times(0;\aConst]$,
\end{itemize}
imply that $\dif$ actually represents a null-homotopic loops in $\Omega(\DiffId(\Klein))$.
Thus the correspondence $\dif\mapsto\gamma_{\dif}$ is an embedded of $\fGFixCollar$ into the path component $\Omega_0(\DiffId(\Klein), \id_{\Klein})$ of $\Omega(\DiffId(\Klein), \id_{\Klein})$ consisting of null-homotopic loops.

It is shown in~\cite[Corollary~1.10]{KhokhliukMaksymenko:PIGC:2020} that the latter inclusion $\fGFixCollar \subset \Omega_0(\DiffId(\Klein), \id_{\Klein})$ is a weak homotopy equivalence.
In particular, for $i\geq1$ we have the following isomorphisms:
\[
    \pi_i\fGFixCollar = \pi_i\Omega_0(\DiffId(\Klein), \id_{\Klein}) = \pi_{i+1}\DiffId(\Klein) = \pi_{i+1}\Circle = 0.
\]
Hence all homotopy groups of $\fGFixCollar$ vanish.
\end{enumerate}
This completes the proof of Theorem~\ref{th:homtype:Kfol}.
\qed

\begin{remark}\label{rem:DlpC08:contractible}
Formula~\eqref{equ:homotopy:incl_FixCollar__FixSandw} for the homotopy in the case~\ref{enum:incl_FixCollar__FixSandw} is also applicable for all $\dif\in\DLpK$ not only belonging to $\fGFixSandw$, and it gives a deformation of $\DLpK$ into $\FolLpDiff(\KFoliation,\RBd{\bConst})$.
In particular, \term{$\FolLpDiff(\KFoliation,\RBd{\bConst})$ is also weakly contractible}.
\end{remark}

\section{Proof of Theorem~\ref{th:homtype:S2nxS1}}\label{sect:proof:th:homtype:S2nxS1}
Let $\SSnxS$ be the twisted $S^2$-bundle over $\Circle$ glued from two copies of $\SKlein_0$ and $\SKlein_1$ by some diffeomorphism of their boundaries, and $\Klein:=\partial\SKlein_0=\partial\SKlein_1$ be their common boundary.
Let also $\SFoliation$ be the foliation into two circles and Klein bottles parallel ot $\Klein$.

Statement~\ref{enum:th:homtype:S2nxS1:lp_fol_def_retr} that the group $\FolLpDiff(\SFoliation)$ is a strong deformation retract of $\FolDiff_{+}(\SFoliation)$ is a direct consequence of results from~\cite{Maksymenko:lens:2023}.

\ref{enum:th:homtype:S2nxS1:Dlp_DK}
We should prove that the ``restriction to $\Klein$ homomorphism'' $\rho:\FolLpDiff(\SFoliation) \to \Diff(\Klein)$ is a weak homotopy equivalence.

As noted above, due to~\cite{Cerf:BSMF:1961, Palais:CMH:1960, Lima:CMH:1964}, this homomorphism is a locally trivial fibration whose fiber, $\FolLpDiff(\SFoliation,\Klein)$, is the group of diffeomorphisms fixed on $\Klein$.
Since $\rho$ is surjective, it suffices to show weak contractibility of $\FolLpDiff(\SFoliation,\Klein)$.

Let $\Cman$ be a neighborhood of $\Klein$ being a union of collars $\RBd{\bConst}$ of $\Klein$ in $\SKlein_0$ and $\SKlein_1$.
Then the inclusion $\FolLpDiff(\SFoliation,\Cman) \subset \FolLpDiff(\SFoliation,\Klein)$ is a homotopy equivalence.
The proof is similar to the formula~\eqref{equ:homotopy:incl_FixCollar__FixSandw} in the proof of the case~\ref{enum:incl_FixCollar__FixSandw} of Theorem~\ref{th:homtype:Kfol}.

On the other hand, $\FolLpDiff(\SFoliation,\Cman)$ is homeomorphic to the product $\FolLpDiff(\SFoliation,\RBd{\bConst})\times\FolLpDiff(\SFoliation,\RBd{\bConst})$ of two copies of $\FolLpDiff(\SFoliation,\RBd{\bConst})$ being weakly contractible by Remark~\ref{rem:DlpC08:contractible}.
Hence $\FolLpDiff(\SFoliation,\Cman)$ is weakly contractible as well.
Therefore $\FolLpDiff(\SFoliation,\Klein)$ is also weakly contractible.
Theorem~\ref{th:homtype:S2nxS1} is completed.

% \bibliographystyle{plainurl}
% \bibliography{biblio}

\end{document}